\newtheorem{satz}{Satz}[section] 
\newtheorem{thm}[satz]{Theorem}
\newtheorem{lem}[satz]{Lemma}
\theoremstyle{remark}
\theoremstyle{definition}
\newtheorem{defi}[satz]{Definition}
\newtheorem{bem}[satz]{Remark}
\newtheorem{prob}[satz]{Problem}
\newtheorem{bsp}[satz]{Example}
\date{}
\title{An Obstruction to Higher-dimensional Kernel of Dirac Operators \ }
\author{Eva-Maria M{ü}ller}
\address{Mathematisches Institut, Universit\"at Freiburg, 79104 Freiburg, Germany}
\email{eva-maria.mueller@math.uni-freiburg.de}
\thanks{AMS Subject classification (2010): 53C27, 55N15}
\begin{document}
\begin{abstract}
This paper provides a $K$-theoretic obstruction for higher kernel dimension for Dirac operators. For this we use a fibre-wise Dirac operator that gives rise to a family of Fredholm operators representing a class in topological $K$-theory. Then Chern classes of this $K$-class contain some information about the kernel of the operators. 
\end{abstract}

\maketitle

\section{Introduction}

This paper is motivated by the question how one can prescribe the eigenvalues of a Dirac operator. More precisely fix a closed spin manifold (with fixed spin structure). Then choose a finite discrete subset $\{\lambda_1, \dots, \lambda_k\}\subset \mathbb{R}$ with multiplicities $m_1,\dots ,m_k$. Is there a metric such that the Dirac spectrum in the interval $[\lambda_1,\lambda_k]$ is exactly given by $\{\lambda_1,\dots, \lambda_k\}$ with multiplicities $m_1, \dots , m_k$?\\

For the Laplacian acting on functions instead of the Dirac operator the question is answered: Yves Colin de Verdière proved 1987 in \cite{V87}, that it is possible to find such a metric if the dimension of the manifold is greater than three. For the Dirac operator not much is known. Nigel Hitchin proved 1974 in \cite[Theorem 4.5]{Hi74} that for every spin manifold $M$ with dimension $0$ or $\pm 1 (\text{mod }8)$ there is a metric such that the Dirac operator has non-trivial kernel. Later Christian Bär proved in \cite[Theorem A]{B96} that every spin manifold admits a metric with non-trivial kernel if $dim(M)\equiv 3 \text{ mod }4$. Mattias Dahl showed in \cite{D03} that every three-dimensional manifold has a metric such that the Dirac operator only has simple eigenvalues (in the sense that simple means every eigenvalue has multiplicity two which is a relict of representation theory). Later Dahl showed in \cite{D05} that for non-zero real numbers $-L<l_1<\dots <l_n<L$ and a closed spin manifold $M$ with dimension $>3$ there is a metric $g$ such that 
\begin{itemize}
\item for $dim M\equiv 3$ mod $4$: $spec D^g \cap ((-L,L)\setminus \{0\}) = \{l_1,\dots ,l_k\}$ and every eigenvalue is simple
\item for $dim M\not\equiv 3$ mod $4$: $spec D^g \cap ((-L,L)\setminus \{0\}) = \{\pm l_1,\dots ,\pm l_k\}$ and every eigenvalue is simple.
\end{itemize}
Nikolai Nowaczyk proved in \cite{N16} that on a manifold with dimension $0,6,7$ mod $8$ there exist a metric such that the Dirac operator has at least one eigenvalue of multiplicity at least two.\\

This paper shall provide a $K$-theoretic obstruction for higher kernel dimension: Let $X$ be an odd dimensional closed manifold. Let $B$ be a compact manifold and $((D_X)_b)_{b\in B}$ be a family of fibre-wise Dirac operators on $X$. This family defines a map \[F\colon B\rightarrow \widetilde{Fred}, \hspace{0,5cm} b \mapsto F_b\vcentcolon = \frac{(D_X)_b}{\sqrt{1-((D_X)_b^2}}\]
(compare Section \ref{topologie}) where $\widetilde{Fred}$ is the space of self-adjoint Fredholm operators equipped with the topology that makes this map continuous, compare Definition \ref{weak}. This map defines an element in the $K$-theory $K^1(B)$. The odd Chern classes (see Section \ref{obstruction fredholm}) provide for every $k\in \mathbb{N}$ a map \[c_k^{odd}\colon K^1(B)\rightarrow H^{2k-1}(B).\] Using this characteristic classes it is possible to formulate the main result: 

\begin{thm} \label{obstruction_weak topology}

Let $B$ be a compact space, $F\colon B\rightarrow \widetilde{Fred}$ a continuous map representing the class  $[F]$ in $K^1(B)$ and $k\in \mathbb{N}$ so that there is  $0<\varepsilon <1$ with \[\#\left(spec(F_b)\cap (-\varepsilon, \varepsilon)\right)\leq k \hspace{1cm}\text{ for all } b\in B.\]Then for all $i_0<\dots <i_k\in \mathbb{N}$ the equation \[c^{odd}_{i_0}([F])\smile\dots \smile c_{i_k}^{odd}([F])=0\] holds. The sign $\smile$ stands for the cup product.
\end{thm}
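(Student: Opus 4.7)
The plan is to use the spectral hypothesis to realise $[F]\in K^{1}(B)$ by a family of unitaries acting on a rank-$\leq k$ subbundle of the ambient Hilbert bundle, and then to deduce the vanishing from the fact that, on the relevant classifying subspace, every odd-degree class contains a factor from an exterior algebra on $k$ generators. For each $b_{0}\in B$ I would choose $\varepsilon_{b_{0}}\in(0,\varepsilon)$ with $\pm\varepsilon_{b_{0}}$ in the resolvent set of $F_{b_{0}}$; by norm-continuity of $F$ this persists in an open neighbourhood $U_{b_{0}}$, on which the Riesz projection
\[
P_{b}^{(b_{0})}=\frac{1}{2\pi i}\oint_{\gamma_{b_{0}}}(F_{b}-z)^{-1}\,dz
\]
along a contour $\gamma_{b_{0}}$ enclosing $(-\varepsilon_{b_{0}},\varepsilon_{b_{0}})$ depends continuously on $b$ and has rank at most $k$ (interpreting the cardinality in the hypothesis with multiplicity). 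Compactness of $B$ yields a finite subcover, and patching the local projections on overlaps---for instance by taking the spectral projection onto the sum of their ranges and using perturbation theory to keep the dimension $\leq k$---produces a globally defined continuous subbundle $E\hookrightarrow\mathcal H\times B$ of rank $\leq k$ containing all eigenvectors of $F_{b}$ for spectrum in $(-\varepsilon/2,\varepsilon/2)$. Relative to $\mathcal H=E_{b}\oplus E_{b}^{\perp}$, the family $F_{b}$ is $\widetilde{Fred}$-homotopic to its block-diagonalisation $F_{b}^{\mathrm{sm}}\oplus F_{b}^{\mathrm{lg}}$ with $F_{b}^{\mathrm{lg}}$ invertible.

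The invertible self-adjoint Fredholm operators on $E^{\perp}$ in the $*$-component deformation-retract via $F\mapsto F/|F|$ onto the space of polarizations of infinite $\pm$ type, which is contractible by a Kuiper-type argument in the norm topology; hence $F^{\mathrm{lg}}$ contributes trivially to $[F]$. Passing $F^{\mathrm{sm}}$ through the Cayley transform $\sigma_{b}=(F_{b}^{\mathrm{sm}}-iI)(F_{b}^{\mathrm{sm}}+iI)^{-1}$ and extending by the identity on $E_{b}^{\perp}$ yields a map $\widetilde\sigma\colon B\to U(\mathcal H)$ representing $[F]$, whose image lies in the subspace $\mathrm{Emb}_{k}\subset U(\mathcal H)$ of unitaries acting as the identity off a rank-$\leq k$ subspace. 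Using the transitive conjugation action of $U(\mathcal H)$ (contractible by Kuiper) on rank-$k$ subspaces, one identifies $\mathrm{Emb}_{k}$ with the Borel construction $EU(k)\times_{U(k)}U(k)$, and because the conjugation action of a connected compact Lie group on its own cohomology is trivial, this has integral cohomology
\[
H^{*}(\mathrm{Emb}_{k})\;\cong\;H^{*}(BU(k))\otimes\Lambda_{\mathbb Z}(y_{1},y_{3},\dots,y_{2k-1}).
\]

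Since the universal odd Chern class $c_{i}^{odd}\in H^{2i-1}(U)$ has odd degree and $H^{\mathrm{odd}}(BU(k))=0$, every term of its pullback to $H^{2i-1}(\mathrm{Emb}_{k})$ contains at least one factor from the exterior algebra $\Lambda_{\mathbb Z}(y_{1},\dots,y_{2k-1})$ on $k$ generators. Consequently, a cup product of $k+1$ such pullbacks contains at least $k+1$ factors from that exterior algebra; by pigeonhole two of these factors must coincide (or the product involves more distinct $y_{2j-1}$ than are available), so the product vanishes in $H^{*}(\mathrm{Emb}_{k};\mathbb Z)$. Naturality under $\widetilde\sigma^{*}$ gives $c_{i_{0}}^{odd}([F])\smile\cdots\smile c_{i_{k}}^{odd}([F])=0$ in $H^{*}(B;\mathbb Z)$ for all $i_{0}<\cdots<i_{k}\in\mathbb N$. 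The principal technical obstacle is the spectral patching in the first step: the local Riesz projections need not coincide on overlaps and their ranks can fluctuate strictly below $k$, so producing a single continuous rank-$\leq k$ subbundle $E\to B$ and verifying that $F^{\mathrm{sm}}\oplus F^{\mathrm{lg}}$ is genuinely $\widetilde{Fred}$-homotopic to $F$ (so that $[F]\in K^{1}(B)$ is preserved) is where the substantive analytic work lies; the cohomological computation on $\mathrm{Emb}_{k}$ and the homotopical statements about invertible self-adjoint Fredholm operators are classical.
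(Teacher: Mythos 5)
Your approach is genuinely different from the paper's, and it contains two gaps that I think are fatal as written.

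\textbf{The hypothesis counts spectral points, not multiplicities.} The paper's hypothesis is that the \emph{set} $\mathrm{spec}(F_b)\cap(-\varepsilon,\varepsilon)$ has cardinality $\leq k$; it says nothing about eigenvalue multiplicities. A single eigenvalue of multiplicity $1000$ lying in $(-\varepsilon,\varepsilon)$ satisfies the hypothesis with $k=1$, yet the Riesz projection onto that eigenspace has rank $1000$. You explicitly flag that you are ``interpreting the cardinality in the hypothesis with multiplicity'' in order to bound the rank of $P_b^{(b_0)}$ by $k$, but that is a strictly stronger hypothesis, so you are proving a strictly weaker theorem. The paper's argument genuinely uses only the set-cardinality reading: it picks $k+1$ distinct real numbers $0,\tfrac{\varepsilon}{k+1},\dots,\tfrac{k\varepsilon}{k+1}$ in $(-\varepsilon,\varepsilon)$ and observes by pigeonhole that at each $b$ at least one of them misses the spectrum. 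Multiplicities never enter.

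\textbf{The spectral patching does not work.} Even under the multiplicity interpretation, the local Riesz projections $P_b^{(b_0)}$ do not agree on overlaps (the contours differ), their ranks can jump strictly below $k$ inside a single $U_{b_0}$, and ``taking the spectral projection onto the sum of their ranges'' can push the rank above $k$. More to the point, there is no reason for a \emph{continuous, globally defined} rank-$\leq k$ subbundle $E\to B$ capturing the small spectrum to exist at all; this is precisely the kind of global obstruction the theorem is trying to detect, and assuming it away begs the question. You acknowledge this is ``where the substantive analytic work lies,'' but it is not merely technical: I do not see how to make this step go through. The cohomological identification $H^*(\mathrm{Emb}_k)\cong H^*(BU(k))\otimes\Lambda_{\mathbb{Z}}(y_1,\dots,y_{2k-1})$ is also asserted rather than proved (and $\mathrm{Emb}_k$ is not literally a fibre bundle over a Grassmannian, since a unitary close to the identity is supported on many different rank-$k$ subspaces).

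For comparison, the paper's proof avoids all of this. It sets $a_j=\tfrac{j\varepsilon}{k+1}$ and $U_j=\{b:F_b-a_j\text{ invertible}\}$, shows $\{U_j\}_{j=0}^k$ covers $B$ by the pigeonhole observation above (Lemma~\ref{invertierbar} gives openness of $U_j$ in the weak topology), and uses the functional-calculus homotopy of Lemma~\ref{homotopie} to replace $F$ by a map $F_j$ homotopic to $F$ with $F_j|_{U_j}$ invertible, so that $c_{i_j}^{odd}([F])$ lifts to $H^{2i_j-1}(B,U_j)$. The cup product of the $k+1$ classes then lands in the image of $H^{\bullet}(B,\bigcup_j U_j)=H^{\bullet}(B,B)=0$. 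This is a short Lusternik--Schnirelmann-type argument in relative cohomology, requiring no spectral subbundle, no Cayley transform, and no computation of an auxiliary classifying space. If you want to salvage your approach you would first have to strengthen the hypothesis to a multiplicity bound and then solve the global subbundle problem, at which point you would have proved less than the paper does with considerably more machinery.
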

Hence, if there is a family of Dirac operator $((D_X)_b)_{b\in B}$ representing a $K$-class such that the cup product of $k$ odd Chern classes does not vanish, then there is $b_0\in B$ such that the kernel dimension of the operator $(D_X)_{b_0}$ is at least $k$.\\
To detect a $k$-dimensional kernel, we see that the base space $B$ has to have a cohomology of at least grad $k^2$. Therefore the unitary group $U(k)$ is a good candidate for the base space.\\
As first example we consider the case of twisted Dirac operators over the circle $S^1$.
\begin{lem}\label{twisted Dirac circle}
Let $k>0$ be a natural number. Let $\Sigma(S^1)$ be the spinor bundle associated to a fixed spin structure on $S^1$. Then there exist a vector bundle $E\rightarrow S^1$ so that the kernel of the twisted Dirac operator $D$ on $E\otimes \Sigma(S^1)$ has dimension at least $k$.  
\end{lem}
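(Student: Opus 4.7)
The plan is to construct $E$ explicitly as the trivial rank-$k$ complex bundle on $S^1$, equipped with a carefully chosen flat $U(k)$-connection, and then read off a $k$-dimensional kernel by separation of variables. Since every complex vector bundle over $S^1$ is topologically trivial, no generality is lost by this choice.

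First I would parametrise $S^1 = \mathbb{R}/L\mathbb{Z}$ and recall that $\Sigma(S^1)$ is a complex line bundle on which the Dirac operator of the standard product-compatible metric acts as $D = -i\,d/dx$ on sections whose (anti)periodicity is dictated by the fixed spin structure. The corresponding spectrum is then of the form $\tfrac{2\pi}{L}(\mathbb{Z}+\delta)$ with $\delta \in \{0,\tfrac{1}{2}\}$ depending on the chosen spin structure.

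Next I would take $E = S^1 \times \mathbb{C}^k$ endowed with the flat unitary connection $\nabla = d + i\alpha\,\mathrm{Id}_k\,dx$ for a constant $\alpha \in \mathbb{R}$ still to be chosen. On $\mathbb{C}^k$-valued spinor sections the twisted Dirac operator then acts as $D\otimes \mathrm{Id}_k + \alpha\,\mathrm{Id}_k$, and a Fourier decomposition in the $S^1$-variable shows that its spectrum is $\tfrac{2\pi}{L}(\mathbb{Z}+\delta)+\alpha$, with every eigenvalue occurring with multiplicity exactly $k$. Setting $\alpha = -\tfrac{2\pi\delta}{L}$ places $0$ in the spectrum, and the associated eigenspace, spanned by the $k$ sections $e^{-2\pi i\delta x/L}\,e_j$ for $j=1,\dots,k$, is the desired kernel of dimension $k$.

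The main obstacle is essentially absent: the argument reduces to an explicit Fourier computation on $S^1$. The only care required is to match the sign and periodicity conventions of the fixed spin structure with the constant gauge shift $\alpha$, so that the chosen shift really cancels the fractional part of the spinor spectrum and produces the parallel sections as the kernel. It is worth noting that in this example $H^{\ast}(S^1)$ is too small for Theorem \ref{obstruction_weak topology} to say anything nontrivial, which is consistent with the fact that arbitrarily high kernel multiplicities are achievable over $S^1$; the theorem will only become restrictive once one passes to higher-dimensional parameter spaces such as $U(k)$.
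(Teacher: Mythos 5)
Your proof is correct, but it takes a genuinely different route from the paper's. The paper proves the lemma as an application of its main Theorem \ref{obstruction_weak topology}: it assembles the family of \emph{all} rank-$k$ flat twists over $S^1$ into a fibre-wise Dirac operator parameterised by $B=U(k)$ (via the tautological bundle $W\to S^1\times U(k)$ with $W|_{S^1\times\{g\}}$ the flat bundle of holonomy $g$), applies the odd Atiyah--Singer family index theorem to identify the odd Chern character of the resulting class $[u]\in K^1(U(k))$ with $ch^{odd}(V)$ for the tautological bundle $V$ on $S^1\wedge U(k)_+$, observes $c_1^{odd}\smile\cdots\smile c_k^{odd}\neq 0$ in $H^\ast(U(k))\cong\Lambda[c_1^{odd},\dots,c_k^{odd}]$, and concludes from the contrapositive of Theorem \ref{obstruction_weak topology} that some $b_0\in U(k)$ has $\dim\ker(D_X)_{b_0}\geq k$. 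Your argument instead exhibits a single $E=S^1\times\mathbb{C}^k$ with a constant flat unitary connection chosen so that the grading shift cancels the fractional offset $\delta$ of the spinor spectrum; an elementary Fourier computation then shows the whole multiplicity-$k$ zero eigenspace sits in the kernel. The two are compatible: your explicit $E$ is exactly the restriction of the paper's universal bundle $W$ to $S^1\times\{b_0\}$ with $b_0=e^{\pm2\pi i\delta}\mathrm{Id}_k\in U(k)$, so you have in effect located the point whose existence the paper only infers. What the paper's argument buys is a working illustration of its obstruction theorem together with the Chern-class computation on $U(k)$, at the cost of being non-constructive and invoking the family index theorem; what yours buys is a short, self-contained, explicit construction that makes the paper's remark about \emph{non-geometric} degrees of freedom entirely transparent. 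Your closing observation is also apt: $H^\ast(S^1)$ is too small to serve as the base $B$ of the obstruction argument, which is precisely why the paper lifts to the parameter space $U(k)$.
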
 This is a very easy case, because the twist generates \em{non-geometric} \em degrees of freedom. In future work we plan to find more geometric examples. \\

In Section \ref{obstruction fredholm} we first prove the $K$-theoretic obstruction for general families of Fredholm operators. In Section \ref{continuous fields} and \ref{topologie} we explain how Dirac operators fit in this setting. The main difficulty is to choose a topology on the space of Fredholm operators which is strong enough to classify $K$-theory and such that the family of Dirac operators define a continuous map. Finally we prove Theorem \ref{obstruction_weak topology} and Lemma \ref{twisted Dirac circle}.

\section{$K$-Theoretic Obstruction Theorem for Fredholm Operators}\label{obstruction fredholm}
For an arbitrary infinite dimensional, separable Hilbert space $H$ we define \begin{equation}\label{Defi_Fred}\begin{split}
Fred^0&\vcentcolon =\{F\colon H\rightarrow H| F\text{ Fredholm, } FF^\ast-1\text{ and } F^\ast F-1 \text{ are compact}\}\\
Fred^1&\vcentcolon=\{F\in Fred^0 | F \text{ selfadjoint}\}
\end{split}.\end{equation} We give both space the topology induced by the operator norm. $K$-theory is a generalized cohomology theory with \begin{equation*}\label{i}K^{-n}(B)=\begin{cases}
 [S^n\wedge B,Fred^0]&\text{ if } B \text{ is based }\\
 [S^n \wedge B_+, Fred^0]& \text{ else }
 \end{cases}.\end{equation*}
For an element in $K^0(B)$, the base point of $S^n\wedge B$ (resp. $S^n\wedge B_+$) has to be mapped to an invertible operator.  The $K$-theory satisfies Bott periodicity $K^i(B)\cong K^{i+2}(B)$. In \cite{AS69} Atiyah and Singer showed for a non-based space\[K^1(B)\cong [B_+,Fred^1].\]One important property is that an element $A\in [B_+,Fred^1]$ represents the zero in $K^1(B)$ if the kernel dimension is locally constant (\cite[Theorem 4.1]{E}). In particular $A$ represents zero if $A_b$ is invertible for every $b\in B$. For more information about $K$-theory see for example \cite{A67} or \cite{LM89}.\\
Chern classes are characteristic classes which map each element in $K^0(B)$ to a certain classe in $H^{2i}(B)$ (see for example \cite{LM89}): \[c_i\colon K^0(B)\rightarrow H^{2i}(B).\]  

We only consider the case when $B$ is non-based: Using the suspension isomorphism $H^i(S^1\wedge B_+)\cong H^{i-1}(B_+)$ we define the odd Chern classes: \[c^{odd}_i\colon K^1(B)=K^0(S^1\wedge B_+)\overset{c_i}\rightarrow H^{2i}(S^1\wedge B_+)\overset{\cong}\rightarrow H^{2i-1}(B_+).\] The odd Chern classes inherit all properties of the even Chern classes. 

\begin{lem}\label{computation odd chern}
Let $B$ be a topological space, $W\rightarrow S^1\wedge B_+$ be a vector bundle. The the relation \[ch^{odd}(W)=\sum_{n=1}^\infty \frac{(-1)^{n+1}}{(n-1)!}c_n^{odd}(W)\] holds, where $ch^{odd}$ is the odd Chern character (for the Definition of odd/even Chern character see \cite[Chapter 10.1]{AH61}).
\end{lem}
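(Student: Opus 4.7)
The plan is to reduce the identity to Newton's formulas relating power sums and elementary symmetric polynomials in the Chern roots of $W$, exploiting the fact that the reduced cup product vanishes on a suspension.

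First I would recall the classical fact that for any pointed space $X$ the reduced cup product on $\tilde H^*(\Sigma X)$ is identically zero: one decomposes $\Sigma X = C_+ X \cup_X C_- X$ into its two cones, represents any positive-degree class by a cocycle supported in one of them (each cone being contractible), and observes that two cocycles with disjoint supports cup to zero. Applied to $S^1 \wedge B_+ = \Sigma B_+$, this yields $\alpha \smile \beta = 0$ whenever $\alpha$ and $\beta$ both have positive degree in $H^*(S^1 \wedge B_+)$.

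Next I invoke the splitting principle to compute $ch(W)$ with formal Chern roots $x_1, \dots, x_r$, so that $c_k(W) = e_k(x_1, \dots, x_r)$ and
\[
ch(W) = \sum_{i=1}^r e^{x_i} = r + \sum_{k \geq 1} \frac{p_k}{k!},
\]
where $p_k = \sum_i x_i^k$. Newton's identities give
\[
p_k = (-1)^{k+1} k\, c_k(W) + \sum_{j=1}^{k-1} (-1)^{j+1} c_j(W) \smile p_{k-j}.
\]
I would then prove $p_k = (-1)^{k+1} k\, c_k(W)$ in $H^*(S^1 \wedge B_+)$ by induction on $k$: the base case $p_1 = c_1(W)$ is immediate, and for the inductive step each summand in the correction becomes, after inserting the inductive hypothesis, a scalar multiple of the cup product $c_j(W) \smile c_{k-j}(W)$ of two positive-degree classes, hence vanishes by the first step.

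Substituting these expressions into the formula for $ch(W)$ gives
\[
ch(W) - \operatorname{rank}(W) = \sum_{k \geq 1} \frac{(-1)^{k+1} k}{k!} c_k(W) = \sum_{k \geq 1} \frac{(-1)^{k+1}}{(k-1)!} c_k(W)
\]
in $H^*(S^1 \wedge B_+)$. Applying the suspension isomorphism $H^{2k}(S^1 \wedge B_+) \cong H^{2k-1}(B_+)$ to both sides translates this into the asserted formula for $ch^{odd}(W)$ and the $c^{odd}_n(W)$. The only genuinely nontrivial step is the vanishing of cup products on the suspension; everything else is a formal manipulation of symmetric polynomials.
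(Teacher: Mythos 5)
Your proof is correct and takes essentially the same approach as the paper: both rest on the observation that cup products of positive-degree classes vanish on the suspension $S^1\wedge B_+$, so only the linear-in-$c_n$ part of the Chern character survives. You fill in more detail by invoking the splitting principle and Newton's identities to derive the coefficient $\frac{(-1)^{n+1}}{(n-1)!}$, whereas the paper simply quotes the standard expansion of $ch$ in Chern classes and discards the product terms.
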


\begin{proof}
The odd Chern character is the composition of the even Chern character with the suspension isomorphism $S\colon H^i(S^1\wedge B_+)\rightarrow H^{i-1}(B_+)$: \[ch^{odd}\colon K^1(B)=K^0(S^1\wedge B_+)\xrightarrow{ch^{even}}H^{2\ast}(S^1\wedge B_+)\overset{S}\cong H^{2\ast-1}(B_+).\]
Let $c_i(W)\in H^{2i}(S^1\wedge B_+)$, $c_j(W)\in H^{2j}(S^1\wedge B_+)$ be two Chern classes. Since the cup product over a suspension vanishes, we see that $c_i(W)\smile c_j(W)=0$.
We compute \begin{equation}\begin{split}ch^{odd}(W)&=S\circ ch^{even}(W)\\
&=S\left( rk(W)+c_1(W)+\frac{1}{2}(c_1(W)\smile c_1(W)-2c_2(W))+\dots\right)\\
&=S\left( rk(W)+\sum_{n=1}^\infty \frac{(-1)^{n+1}}{(n-1)!} c_n(W)\right)=\sum_{n=1}^\infty \frac{(-1)^{n+1}}{(n-1)!}c^{odd}_i(W)
\end{split}\end{equation}
\end{proof}

To prove the obstruction theorem for families of Fredholm operators we need:

\begin{lem}\label{homotopie_stark}
Let $f\colon\mathbb{R}\rightarrow \mathbb{R}$ be a continuous function such that there is a constant $0<c\leq 1$ with $f|_{\mathbb{R}\setminus (-c,c)}=id$. Let $B$ be a topological space, $A\colon B\rightarrow Fred^1$ be a continuous map. Then the map $f(A)\colon B\rightarrow Fred^1$, given by $f(A)_b=f(A_b)$, is a well defined continuous map. Moreover if there is a homotopy $h$ between $f$ and identity with $h|_{(-c,c)\times [0,1]}=id$, then $f(A)$ and $A$ are homotopic.
\end{lem}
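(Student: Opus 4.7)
The plan is to establish the three claims in sequence: that $f(A_b)$ lies in $Fred^1$ pointwise, that $b \mapsto f(A_b)$ is norm-continuous, and finally that the parametrised version yields the desired homotopy.

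First I would decompose $f = \operatorname{id} + g$, where $g := f - \operatorname{id}$ is continuous and supported in $(-c,c)$. The crucial structural observation is that any $A_b \in Fred^1$ is self-adjoint with $A_b^2 - 1$ compact, which forces the essential spectrum $\sigma_{\mathrm{ess}}(A_b) \subseteq \{-1,1\}$; the remaining spectrum consists of isolated eigenvalues $\lambda_n$ of finite multiplicity accumulating only at $\pm 1$. Since $0 < c \leq 1$ guarantees that $g$ vanishes on an open neighbourhood of $\{-1,1\}$, the spectral decomposition $g(A_b) = \sum_n g(\lambda_n) E_n$, summed over these eigenvalues with finite-rank spectral projections $E_n$, is norm-convergent with $g(\lambda_n) \to 0$, so $g(A_b)$ is a norm limit of finite-rank operators and hence compact. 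Consequently $f(A_b) = A_b + g(A_b)$ is self-adjoint (since $g$ is real-valued), a compact perturbation of a Fredholm operator, and $f(A_b)^2 - 1$ is a sum of compact terms, which puts $f(A_b)$ into $Fred^1$.

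For continuity, I would fix $b_0 \in B$, choose a neighbourhood $U$ on which $\|A_b\| \leq M$ (available since $A$ is norm-continuous), and approximate $f$ uniformly on $[-M,M]$ by polynomials $p_n$ via Stone-Weierstrass. Polynomial functional calculus is manifestly norm-continuous in the operator on bounded sets, and the bound $\|p_n(A_b) - f(A_b)\| \leq \sup_{[-M,M]} |p_n - f|$ exhibits $b \mapsto f(A_b)$ as a uniform limit of continuous maps on $U$, hence continuous.

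For the homotopy statement the same framework carries over with $t \in [0,1]$ as an additional parameter. Reading the hypothesis in the natural way that each slice $h_t := h(\cdot,t)$ is the identity outside $(-c,c)$ for every $t$, the first step already guarantees $h_t(A_b) \in Fred^1$. Joint continuity of $(b,t) \mapsto h_t(A_b)$ then follows from a two-variable Stone-Weierstrass approximation $P_n(x,t) = \sum_k q_{n,k}(t)\, x^k \to h(x,t)$ uniformly on $[-M,M] \times [0,1]$, with $q_{n,k}$ continuous in $t$: the maps $(b,t) \mapsto P_n(A_b,t)$ are jointly continuous, and the uniform estimate transfers this to $h_t(A_b)$. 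Since $h_0 = f$ and $h_1 = \operatorname{id}$ this is the required homotopy. The one genuine obstacle throughout is securing the compactness of $g(A_b)$; this is precisely where the condition $c \leq 1$ enters in an essential way, aligning the support of $g$ with the gap around $\sigma_{\mathrm{ess}}(A_b)$, while everything else reduces to routine continuous functional calculus.
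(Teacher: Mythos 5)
Your proof is correct and follows the same outline as the paper's (very terse) argument: compactness of $f(A_b)^2-1$ via the spectral gap of $A_b$ away from $\pm 1$, norm-continuity of the continuous functional calculus, and the joint homotopy $H(b,t)=h(A_b,t)$. The Stone--Weierstrass approximations and the spectral-theoretic compactness argument you supply are standard fills for what the paper simply asserts, and you correctly read the homotopy hypothesis as requiring each slice $h_t$ to agree with the identity outside $(-c,c)$.
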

\begin{proof}
Because $f|_{\mathbb{R}\setminus (-c,c)}=id$ and $A_b^2-1$ is compact, the operator $(f(A_b))^2-1$ is compact.\\
Since $b\mapsto A_b$ is continuous with respect to the norm topology, the map $b\mapsto f(A_b)$ is continuous.\\
For the last part: Let $h\colon\mathbb{R}\times [0,1]\rightarrow \mathbb{R}$ be the homotopy. We consider the map $H\colon B\times[0,1]\rightarrow Fred^1$ given by $H(b,t)=h(A_b,t)$. This is a homotopy between $f(A)$ and $A$. 
\end{proof}

\begin{thm} \label{obstruction_strong topology}
Let $B$ be a compact space, $A\colon B\rightarrow Fred^1$ a continuous map representing the class  $[A]$ in $K^1(B)$ and $k\in \mathbb{N}$ so that there is  $0<\varepsilon <1 $ with \[\#(spec(A_b)\cap (-\varepsilon, \varepsilon))\leq k \hspace{1cm}\text{ for all } b\in B.\]Then for all $i_0<\dots <i_k\in \mathbb{N}$ the equation \[c^{odd}_{i_0}([A])\smile\dots \smile c_{i_k}^{odd}([A])=0\] holds. The sign $\smile$ stands for the cup product.
\end{thm}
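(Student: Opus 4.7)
The strategy is to show that, under the spectral hypothesis, $[A]\in K^1(B)$ admits a representative that factors through a finite-dimensional unitary group $U(r)$ with $r\leq k$, and then to invoke the vanishing of Chern classes of rank-$r$ bundles above degree $r$.

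First I would apply Lemma~\ref{homotopie_stark} to replace $A$ by a homotopic family $\widetilde{A}:=f(A)$, where $f\colon\mathbb{R}\to\mathbb{R}$ is continuous, equal to $\mathrm{id}$ outside $(-\varepsilon,\varepsilon)$, and equal to $0$ on a smaller closed interval $[-\varepsilon',\varepsilon']$ with $0<\varepsilon'<\varepsilon$. The straight-line homotopy $h(t,s)=(1-s)f(t)+s\,t$ is the identity outside $(-\varepsilon,\varepsilon)$, so the lemma gives $\widetilde{A}\simeq A$ and $[\widetilde{A}]=[A]$ in $K^1(B)$.

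Next I would extract a finite-rank bundle. The spectral subspace of $\widetilde{A}_b$ for $[-\varepsilon',\varepsilon']$ has dimension at most $k$ by hypothesis. Arranging that $\pm\varepsilon'\notin\sigma(\widetilde{A}_b)$ for every $b\in B$, the associated spectral projection $P_b$ is norm-continuous in $b$ with locally constant rank $r\leq k$. Then $E:=\bigsqcup_b\mathrm{im}(P_b)$ is a vector bundle of rank $r$ over $B$, the decomposition $H=E\oplus E^\perp$ is $\widetilde{A}$-invariant, and $\widetilde{A}|_{E^\perp}$ is invertible, hence contributes the trivial class to $K^1(B)$ by the theorem cited in the introduction. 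Applying the Cayley transform $u_b:=(\widetilde{A}_b|_E-iI)(\widetilde{A}_b|_E+iI)^{-1}$ converts $\widetilde{A}|_E$ into a continuous unitary family on $E$, i.e.\ a map $B\to U(r)\subset U(\infty)$ that represents $[A]$ under $K^1(B)=[B_+,U(\infty)]$.

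Under the clutching identification $K^1(B)=K^0(S^1\wedge B_+)$, the class $[A]$ then corresponds to a bundle $W$ on $S^1\wedge B_+$ of rank $r\leq k$, so $c_i(W)=0$ for $i>r$ and hence $c^{odd}_i([A])=0$ for $i>r$. Since $i_0<\dots<i_k$ are $k+1$ strictly increasing elements of $\mathbb{N}$, either $i_0=0$, in which case $c^{odd}_0([A])\in H^{-1}(B)=0$ already vanishes, or $i_0\geq 1$, in which case $i_k\geq k+1>r$ and $c^{odd}_{i_k}([A])=0$. In either case one factor of the cup product vanishes, and so does the whole product. The delicate point is the continuity of $P_b$ in the second step: a single global $\varepsilon'$ avoiding all boundary eigenvalues need not exist, and one must either argue via a partition-of-unity on the compact space $B$ or use a finer choice of $f$ in Lemma~\ref{homotopie_stark} to push the boundary-crossing eigenvalues into the interior of $(-\varepsilon',\varepsilon')$ before forming $P_b$.
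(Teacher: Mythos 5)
Your approach is fundamentally different from the paper's, and it contains a fatal gap that you partially recognise but do not resolve and, I think, underestimate.

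The paper's proof is a Lusternik--Schnirelmann covering argument: for each $j=0,\dots,k$ it builds a homotopic representative $A_j=g_j(A)$ (shifting the spectrum by $a_j=j\varepsilon/(k+1)$) that is invertible over the open set $U_j=\{b: A_b-a_j\text{ invertible}\}$; the pigeonhole principle applied to the $k+1$ points $a_j\in(-\varepsilon,\varepsilon)$ shows $\{U_j\}$ covers $B$; hence each $c^{odd}_{i_j}([A])$ lifts to $H^{2i_j-1}(B,U_j)$ and their cup product lifts to $H^\bullet(B,\bigcup U_j)=H^\bullet(B,B)=0$. No spectral sub-bundle is ever formed.

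Your proposal instead tries to extract an honest rank-$\leq k$ spectral bundle $E$, and this requires a uniform spectral cut $\varepsilon'$ with $\pm\varepsilon'\notin\sigma(\widetilde A_b)$ for \emph{every} $b$. You flag this as ``the delicate point'', but it is in fact decisive, for a stronger reason than mere failure of continuity of $P_b$: if such a uniform gap existed, then the homotopic family $g(A)$ (for $g$ vanishing exactly on $[-\varepsilon',\varepsilon']$ and equal to the identity outside $(-\varepsilon,\varepsilon)$, which is admissible in Lemma~\ref{homotopie_stark}) would have $\ker g(A)_b=\operatorname{im}P_b$ of locally constant rank, and by the result quoted in Section~\ref{obstruction fredholm} (\cite[Theorem~4.1]{E}) a family with locally constant kernel dimension represents $0\in K^1(B)$. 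Thus $[A]=0$ and \emph{all} odd Chern classes vanish. In other words, your argument can only apply in the vacuous case; the families for which the theorem carries content are precisely those where no global $\varepsilon'$ exists, and these your strategy does not touch. Consistent with this, the Cayley transform $(\widetilde A_b|_E-i)(\widetilde A_b|_E+i)^{-1}$ you form has all eigenvalues $(\lambda-i)/(\lambda+i)$ with $|\lambda|<\varepsilon'$, i.e.\ it is uniformly close to the constant $-I$ and hence nullhomotopic as a map to $U(r)$ --- it does not encode $[A]$ in the non-trivial cases. The suggested repairs (a partition of unity on $B$, or a finer choice of $f$) are exactly the difficulties that the families-index construction is designed to overcome; they produce a \emph{virtual} bundle of uncontrolled rank, not an actual rank-$\leq k$ bundle, so the ``$c_i(W)=0$ for $i>r$'' conclusion is not available. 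You should instead adopt the covering argument, which sidesteps the gap problem entirely.
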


\begin{proof}
Consider for $j=0,\dots, k$ the sets \[U_j\vcentcolon=\{b\in B |A_b-\frac{j\varepsilon}{k+1} \text{ is invertible}\}.\] Using the Neumann series it is easy to see that the set of invertible operators is open in $Fred^1$. Therefore $U_j\subset B$ is open.
Suppose there is $b\in B$ such that $b\not \in U_j$ for all $j$. Then $\frac{j\varepsilon}{k+1}$ is an eigenvalue of $A_b$ for every $j\in 0,\dots , k$. But 
\[\left| \frac{j\varepsilon}{k+1}\right| \leq\frac{k\varepsilon}{k+1} < \varepsilon.\]
This implies $\#(spec(A_b)\cap (-\varepsilon, \varepsilon))>k$, a contradiction. Therefore $\{U_j\}$ is an open cover of $B$. Let $a_j\vcentcolon=\frac{j\varepsilon}{k+1}\in [0,1)$. We define the continuous map \[g_j\colon\mathbb{R}\rightarrow \mathbb{R},\hspace{1cm} 
x\mapsto \begin{cases}
x 	& \text{if } \vert x \vert\geq 1\\
\frac{x-a_j}{1+a_j}& \text{if } x\leq a_j, \ \vert x \vert \leq 1\\
\frac{x-a_j}{1-a_j}& \text{if } x\geq a_j, \ \vert x \vert \leq 1
\end{cases}.\]
The map $g_j$ satisfies the requirements of Lemma \ref{homotopie_stark}. A linear homotopy gives a homotopy between $g_j$ and the identity. By Lemma \ref{homotopie_stark} there is a well-defined function $A_j\vcentcolon=g_j(A)\colon B\rightarrow Fred^1$ and $A$ is homotopic to $A_j$. Therefore \begin{equation}\label{77}
c^{odd}_{i_j}([A])=c^{odd}_{i_j}([{A}_j]).\end{equation} Moreover $ker((A_j)_b)=ker(A_b-a_j)$ for every $b\in B$. Therefore ${A}_j|_{U_j}$ is invertible, hence  $[{A}_j|_{U_j}]=0\in K^1(U_j)$. This implies \begin{equation}\label{78}c_{i_j}^{odd}([{A}_j])\in im\left( H^{2i_j-1}(B,U_j)\rightarrow H^{2i_j-1}(B)\right).\end{equation}
Using \eqref{77} and \eqref{78} we get \[\begin{split}c^{odd}_{i_0}([A])\smile \dots \smile c^{odd}_{i_k}([A])=&c^{odd}_{i_0}([{A}_0])\smile \dots \smile c^{odd}_{i_k}([{A}_k])\\
&\in im\left(H^\bullet (B,\bigcup U_j)\rightarrow H^\bullet(B)\right).\end{split}\] But $H^\bullet(B,\bigcup U_j)=H^\bullet(B,B)=0$ which finishes the proof.
\end{proof}

Last theorem gives a topological obstruction for higher kernel dimension of Fredholm operators. In this paper we want to use the obstruction to get a lower bound for the kernel dimension of Dirac operators. For this purpose we consider a family of Dirac operators $((D_X)_b)_{b\in B}$ indexed by some compact manifold $B$ coming from a fibre-wise Dirac. We associate to this family the bounded Fredholm operators $(F_b)=\left(\frac{(D_X)_b}{\sqrt{1-(D_X)_b^2}}\right)$. This family represents an element in $K^1(B)$ if there is a continuity condition on $b\mapsto F_b$. We will see in the next sections, that the map $b\mapsto F_b$ is not continuous if we use the norm topology on $Fred^1(B)$. To solve we will describe a weaker topology.

\section{Obstruction Theorem for Dirac Operators}
As mentioned above we want to use the obstruction theorem for high kernel dimension to estimate the kernel dimension of Dirac operator. For this purpose we have to understand, why fibre-wise Dirac operators represent elements in $K$-theory. 
\subsection{Fibre-wise Dirac Operator}\label{fibre-wise Dirac operators}
We first summarize some properties of the Dirac operator. 
Let $X$ be a closed spin-manifold with volume measure, $V\rightarrow X$ be a Clifford bundle with complex inner product. The space $\Gamma(X,V)$ is the pre-Hilbert space (a Hilbert space without completeness condition) of smooth sections with $L^2$ inner product.
The completion of this pre-Hilbert space is the space $L^2(X,V)$.
The Dirac operator is a self-adjoint unbounded operator \[D\colon L^2(X,V)\rightarrow L^2(X,V).\]
Furthermore the Dirac operator admits a spectral decomposition. Since $D$ is self-adjoint, all eigenvalues are real. Therefore the operators $D\pm i$ are invertible operators with bounded inverse $(D\pm i)^{-1}\colon L^2(X,V)\rightarrow dom(D)\hookrightarrow L^2(X,V)$. By the G\r{a}rding inequality and the Rellich theorem the bounded operator $(D^2+1)^{-1}$ is compact. Therefore the operator $\frac{D}{\sqrt{1+D^2}}$ is an element in $Fred^1$. \\

We come to the fibre-wise situation. We only give a short overview. For more details see \cite[Chapter 1]{Wi} or \cite{BGV92}.\\
We consider an oriented Riemannian fibre bundle $X\hookrightarrow M \xrightarrow{\pi} B$. The fibres $\pi^{-1}(b)\cong X$ are closed $n$-dimensional compact spin-manifolds where $n$ is odd. We denote the vertical tangent bundle by $TX\vcentcolon=ker d\pi$. \\

Let $(V,g,\bigtriangledown,c_X)\rightarrow M$ be a vertical Dirac bundle. We associate the fibre-wise Dirac operator \[D_X\vcentcolon=\sum_{i=1}^n c_X (e_i)\bigtriangledown^V_{e_i}\colon\Gamma(M,V)\rightarrow \Gamma(M,V),\]
where $e_1,\dots e_n$ is a local orthonormal frame of $TX$ and $c_X$ is a Clifford multiplication. 

The Dirac operator $D_X$ restricts to fibre-wise sections $\Gamma(M_b, V_b)$. We get a family $b\mapsto (D_X)_b$ where $(D_X)_b$ acts on $\Gamma(M_b, V_b)$. We get a fiberwise inner product (compare \cite[Section 1.3]{E2}) \begin{equation}\label{fiberwise inner product}\langle s,t\rangle_b\vcentcolon=\int_{M_b}\langle s(b),t(b)\rangle dvol_{M_b}(b).\end{equation}
As explained above the fibre-wise Dirac $(D_X)_b$ is then an unbounded, densely defined operator $dom((D_X)_b)\subset L^2(M_b,V_b)\rightarrow L^2(M_b,V_b)$.

We associate to this family of unbounded operators $(D_X)_b$ a family of bounded operators: Via functional calculus we associate a family of bounded operators $F_b\vcentcolon=\varphi((D_X)_b)$, where \[\varphi\colon\mathbb{R}\rightarrow \mathbb{R}, \hspace{1cm} x\mapsto \frac{x}{\sqrt{1+x^2}}.\] 

We consider the operator $F_b$ as a bounded operator $L^2(M_b,V_b)\rightarrow L^2(M_b,V_b)$. This operator is a self-adjoint Fredholm operator, so that $F^2_b-1$ is compact. 

\begin{prob}\label{problem}We see now the first problems: The Hilbert spaces on which the operators $F_b$ act differ by changing $b$. To get a family of Fredholm operators in the sense of Section \ref{obstruction fredholm} we have to identify all Hilbert spaces and find a topology on $Fred^1$ which is strong enough to classify $K$-theory and weak enough such that the map $b\rightarrow F_b$ is continuous. 
\end{prob}
The readers who know already in which sense the fibre-wise Dirac operator represent an element in $K$-theory can skip the next section and jump directly to Section \ref{proof}.
\subsection{Continuous Fields of Hilbert spaces}\label{continuous fields}
To solve Problem \ref{problem}, we follow Dixmier and Douady. They introduced the term \emph{continuous field of Hilbert spaces}, which generalises the vector bundle theory to the infinite dimensional setting. In this paper we discuss only those things of importance for later sections and omit some proofs. For details see \cite{DD}.\\
Let $B$ be a topological space and $(H(b))_{b\in B}$ be a family of (pre-)Hilbert spaces. Let $\langle \cdot,\cdot \rangle_b$ be the scalar product on $H(b)$. We consider the space $\prod_{b\in B} H(b)$. We take an element $x\in \prod_{b\in B} H(b)$ as a map $x\colon B\rightarrow \bigcup_{b\in B} H(b)$ such that $x(b)\in H(b)$. In analogy to bundles we call an element of $\prod_{b\in B} H(b)$ \em a section of $(H(b))_{b\in B}$\em.
\begin{defi}\label{defi_field}
Let $B$ be a topological space. A continuous field of Hilbert spaces over $B$ is a pair $\mathcal{H}=((H(b))_{b\in B},\Gamma)$ where $(H(b))_{b\in B}$ is a family of Hilbert spaces and $\Gamma\subset \prod_{b\in B} H(b)$ so that: \begin{itemize}
\item[(i)] $\Gamma$ is a $C(B,\mathbb{C})$- module, where $C(B,\mathbb{C})$ are the continuous functions,
\item[(ii)] for every $b\in B$ and every $v\in H(b)$, there exist $x\in \Gamma$ so that $x(b)= v$,
\item[(iii)] for every $x\in \Gamma$, the function $B\rightarrow \mathbb{C}$ given by $b\mapsto \langle x(b),y(b)\rangle_b$ is continuous,
\item[(iv)] let $x\in \prod_{b\in B} H(b)$. If for all $b\in B$ and for all $\varepsilon >0$, there is $y\in \Gamma$ and a neighbourhood $U$ of $b$ so that $sup_{a\in U}\Vert x(a)-y(a)\Vert_a < \varepsilon$, then $x\in \Gamma$. 
\end{itemize}
We call the elements of $\Gamma$ the \em continuous sections \em of $\mathcal{H}$.
\end{defi}

\begin{bsp}\label{trivial bundle}
Let $H$ be a Hilbert space, $B$ a topological space. We consider the trivial field of Hilbert spaces with $H(b)=H$ for every $b$. Let $\tilde{\Gamma}$ be the set of continuous functions $B\rightarrow H$. We define $\Gamma\vcentcolon=\{id_B\times x \colon B\rightarrow B\times H \ |\ x\in \tilde{\Gamma}\}$. Then the pair $\mathcal{H}_B\vcentcolon=((H(b))_{b\in B},\Gamma)$ is a continuous field of Hilbert spaces.
\end{bsp}
\begin{bem}\label{remark topology}
Let $\mathcal{H}=((H(b))_{b\in B},\Gamma)$ be a continuous field of Hilbert spaces. Define $Tot(\mathcal{H})$ be the union of the spaces $H(b)$ and $\pi\colon Tot(\mathcal{H})\rightarrow B$ be the canonical projection. \\
It is possible (see \cite[Chapter 2]{DD}) to equip $Tot(\mathcal{H})$ with a topology such that the map $\pi$ is continuous and the elements of $\Gamma$ are exactly the continuous maps $B\rightarrow Tot(\mathcal{H})$.\\ 
If we consider Example \ref{trivial bundle}, it turns out, that the topology constructed in \cite{DD} is exactly the product topology. 
\end{bem}
\begin{defi}\label{prefield}
Let $B$ be a topological space. For every $b\in B$, let $E(b)$ be a pre-Hilbert space and $\overline{E(b)}$ be the closure of $E(b)$. We call the pair $\mathcal{E}\vcentcolon=((E(b))_{b\in B},\Lambda)$, where $\Lambda \subset \prod_{b\in B} E(b)$, a continuous field of pre-Hilbert spaces if \begin{itemize}
\item For every $b\in B$, the set $\Lambda(b)$ is a dense subset of $\overline{E(b)}$. 
\item For every $x\in \Lambda$ the function $B\rightarrow \mathbb{C}$ given by $b\mapsto \langle x(b),y(b)\rangle_b$ is continuous. 
\end{itemize}
\end{defi}

\begin{lem}\cite[Lemma 2.7]{E2}\label{completion fields}
Let $\mathcal{E}=((E(b))_{b\in B},\Lambda)$ be a continuous field of pre-Hilbert spaces, $H(b)\vcentcolon=\overline{E(b)}$ be the closure. Then there is a unique subset $\Gamma\subset \prod H(b)$, so that $\Lambda\subset \Gamma$ and $((H(b))_{b\in B},\Gamma)$ is a continuous field of Hilbert spaces.
\end{lem}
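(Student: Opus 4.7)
My approach is to define $\Gamma$ as the set of sections locally uniformly approximable by finite $C(B,\mathbb{C})$-linear combinations of elements of $\Lambda$:
\begin{equation*}
\begin{split}
\Gamma \vcentcolon= \Big\{ x \in \prod_{b\in B} H(b) \,:\ & \forall b\in B,\ \forall \varepsilon>0,\ \exists\, y_1,\dots,y_n\in\Lambda,\ f_1,\dots,f_n\in C(B,\mathbb{C}), \\
& U \ni b \text{ open, with } \sup_{a\in U}\big\|x(a)-\sum_i f_i(a)y_i(a)\big\|_a<\varepsilon \Big\}.
\end{split}
\end{equation*}
This is essentially the only possible candidate: axioms (i) and (iv) of Definition \ref{defi_field} jointly force any valid $\Gamma$ to contain this set, which yields uniqueness once existence is established; conversely $\Lambda\subset\Gamma$ is immediate.

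The plan is to verify the four axioms for $((H(b))_{b\in B},\Gamma)$ in order. Axiom (i) is essentially baked into the definition, using local boundedness of any $f\in C(B,\mathbb{C})$ to rescale $\varepsilon$-approximants. Axiom (iv) is a standard diagonal $\varepsilon/2$-argument. For axiom (iii), given $x_1,x_2\in\Gamma$ and $b\in B$, I would choose local approximants $\tilde x_j=\sum_i f_i^{(j)} y_i^{(j)}$ on a common neighbourhood of $b$; the inner products $\langle \tilde x_1(a),\tilde x_2(a)\rangle_a$ are continuous since the $\Lambda$-inner products are (Definition \ref{prefield}), and Cauchy--Schwarz bounds the error $|\langle x_1,x_2\rangle_a - \langle\tilde x_1,\tilde x_2\rangle_a|$ uniformly near $b$ by $C\varepsilon$, where $C$ depends only on local norms of $x_1,x_2$.

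The substantive step is axiom (ii): given $b\in B$ and $v\in H(b)$, I must produce a global section $x\in\Gamma$ with $x(b)=v$. By density of $\Lambda(b)$ in $\overline{E(b)}=H(b)$, one picks $y\in\Lambda$ with $\|y(b)-v\|_b$ small, and the candidate is the single-point modification given by $x(a)=y(a)$ for $a\neq b$ and $x(b)=v$. To verify $x\in\Gamma$, for given $\varepsilon>0$ one picks $z\in\Lambda$ with $\|z(b)-v\|_b<\varepsilon/2$; then $\|y(b)-z(b)\|_b<\varepsilon$, and continuity of $a\mapsto\|y(a)-z(a)\|_a^2=\langle y,y\rangle_a-\langle y,z\rangle_a-\langle z,y\rangle_a+\langle z,z\rangle_a$ furnishes a neighbourhood of $b$ on which $\|y(a)-z(a)\|_a<\varepsilon$, while at the exceptional point $b$ itself the bound $\|x(b)-z(b)\|_b=\|v-z(b)\|_b<\varepsilon/2$ is direct.

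I expect axiom (ii) to be the main obstacle: it is the one step where the fibrewise density of $\Lambda(b)$ must genuinely be converted into local uniform density along $B$, and this conversion essentially uses continuity of the prefield inner product via the polarisation identity above. The remaining axioms are $\varepsilon/2$-bookkeeping, and uniqueness follows from the observation in the first paragraph together with the same density-plus-continuity argument used for axiom (ii).
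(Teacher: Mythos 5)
Your definition of $\Gamma$ as the local uniform closure of the $C(B,\mathbb{C})$-span of $\Lambda$, and the verifications of axioms (i), (iii), (iv) and of uniqueness, are sound and match the intended approach (the paper itself gives no proof, citing Ebert). However, your verification of axiom (ii) contains a genuine gap: the single-point modification $x$, with $x(b)=v$ and $x(a)=y(a)$ for $a\neq b$ (for a fixed $y\in\Lambda$), does \emph{not} lie in $\Gamma$ unless $y(b)=v$. Indeed, suppose $b$ is not isolated. If $x\in\Gamma$, then for every $\varepsilon>0$ there are a neighbourhood $U\ni b$ and $z$ in the $C(B,\mathbb{C})$-span of $\Lambda$ with $\sup_{a\in U}\|x(a)-z(a)\|_a<\varepsilon$; on $U\setminus\{b\}$ this reads $\|y(a)-z(a)\|_a<\varepsilon$, and since $a\mapsto\|y(a)-z(a)\|_a$ is continuous and $b\in\overline{U\setminus\{b\}}$, we get $\|y(b)-z(b)\|_b\le\varepsilon$. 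Combined with $\|v-z(b)\|_b<\varepsilon$ this forces $\|y(b)-v\|_b\le 2\varepsilon$ for \emph{every} $\varepsilon>0$, hence $y(b)=v$. Your step ``then $\|y(b)-z(b)\|_b<\varepsilon$'' tacitly assumes $\|y(b)-v\|_b<\varepsilon/2$ for all $\varepsilon$ simultaneously, which is only possible if $v\in\Lambda(b)$, i.e.\ precisely when nothing needed proving.

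The correct argument is a telescope with a norm truncation. Inductively choose $y_n\in\Lambda$ with $\bigl\|\sum_{i\le n}y_i(b)-v\bigr\|_b<2^{-n-1}$; then $\|y_n(b)\|_b<2^{-n+1}$ for $n\ge 2$. For $n\ge 2$ set $g_n(a)=\min\bigl(1,\,2^{-n+2}/\|y_n(a)\|_a\bigr)$ (with $g_n(a)=1$ where $\|y_n(a)\|_a=0$), a continuous $[0,1]$-valued function on $B$ by continuity of $a\mapsto\|y_n(a)\|_a$, satisfying $g_n(b)=1$ and $\|g_n(a)y_n(a)\|_a\le 2^{-n+2}$ for all $a$. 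Then the partial sums of $x\vcentcolon=y_1+\sum_{n\ge 2}g_ny_n$ lie in $\Gamma$ by axiom (i), converge \emph{uniformly on all of $B$}, hence $x\in\Gamma$ by axiom (iv), and $x(b)=\sum_n y_n(b)=v$. This truncation trick is the step that converts fibrewise density of $\Lambda(b)$ into a genuine global section, and it is the real content of the lemma; without it your $\Gamma$ satisfies only the weaker ``$\Gamma(b)$ dense in $H(b)$'' rather than axiom (ii) as stated in Definition \ref{defi_field}.
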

We come back to the situation we are interested in -- the fibre-wise Dirac operator (compare \cite[Example 2.12]{E2}).
\begin{bsp} \label{Fiberwise Dirac continuous field}
Let $X\hookrightarrow M\xrightarrow{\pi} B$ be a fibre bundle with $\pi^{-1}(b)\cong X$ an odd dimensional, closed spin-manifold, $B$ a compact manifold. Let $V\rightarrow M$ be a vertical Clifford bundle. We consider the pre-Hilbert spaces $E(b)\vcentcolon=\Gamma(V_b,M_b)$. Let $\Lambda\subset \prod_{b\in B} H(b)$ be the set $\Gamma(M,V)$. For every $b\in B$, we associate to $s\in \Lambda$ an element $s(b)\vcentcolon=s|_{M_b}\in \Gamma(M_b,V_b)$. By the dominated convergence theorem the map $b\rightarrow \langle s(b),t(b)\rangle_b$ is a continuous map, where $s,t\in \Lambda$. Therefore the pair $\Gamma_B(M,V)\vcentcolon=((E(b))_{b\in B}, \Lambda)$ is a continuous field of pre-Hilbert spaces. The completion (in the sense of Lemma \ref{completion fields}) $L_B^2(M,V)=((L^2(M_b,V_b))_{b\in B},\Gamma)$ is a continuous field of Hilbert spaces. 
\end{bsp}

\begin{defi}(Direct sum)\label{Direct sum}
For every element $i$ of a index set $I$, let $\mathcal{H}_i=((H_i(b))_{b\in B},\Gamma_i)$ be a continuous field of Hilbert spaces over a topological space $B$. Let $H(b)\vcentcolon=\bigoplus_I H_i(b)$ and define \[\Gamma'\vcentcolon=\{\sum_{i\in J}x_i|J\subset I \text{ is finite }, x_i\in \Gamma_i\}.\] Let $\Gamma$ be the closure of $\Gamma'$ with respect to the property (iv) of Definition \ref{defi_field}. Then we write $\bigoplus_I \mathcal{H}_i$ for the continuous field $((H(b))_{b\in B},\Gamma)$. 
\end{defi}
\begin{thm}\cite[Theor\`{e}me 4]{DD}\label{trivial theorem}
Let $\mathcal{H}=((H(b))_{b\in B},\Gamma)$ be a continuous field of Hilbert spaces with separable fibres over a paracompact space $B$. Let $H$ be a Hilbert space with $dim(H)=\infty$. And let $\mathcal{H}_B=B\times H$ be the trivial Hilbert field (see Example \ref{trivial bundle}). Then $\mathcal{H}\oplus \mathcal{H}_B\cong \mathcal{H}_B$.
\end{thm}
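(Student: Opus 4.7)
The plan is to construct a fibrewise unitary isomorphism $\Phi\colon \mathcal{H}\oplus\mathcal{H}_B\to \mathcal{H}_B$ by producing a countable family $(u_n)_{n\in\mathbb{N}}$ of continuous sections of $\mathcal{H}\oplus\mathcal{H}_B$ that restricts to an orthonormal basis of each fibre. Given such a family, one defines $\Phi$ by sending $u_n(b)\mapsto \hat{e}_n(b)$, where $(\hat{e}_n)$ denotes the constant sections of $\mathcal{H}_B$ coming from a fixed orthonormal basis $(e_n)$ of $H$. Continuity of $\Phi$ and $\Phi^{-1}$ then reduces to checking that Fourier coefficients $b\mapsto \langle x(b),u_n(b)\rangle_b$ are continuous, which follows from property (iii) of Definition \ref{defi_field} and the Parseval expansion.

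The construction of $(u_n)$ proceeds in three steps. First, using separability of the fibres together with paracompactness of $B$, I would produce a countable family $(s_n)_n\subset \Gamma$ that is fibrewise total, i.e.\ $\overline{\mathrm{span}\{s_n(b)\}_n}=H(b)$ for every $b\in B$; this is standard and uses property (ii) to find local sections hitting chosen countable dense subsets, and a locally finite refinement to patch them globally. Second, I would pass to $\mathcal{H}\oplus\mathcal{H}_B$ and consider the interleaved sequence $t_{2n-1}=(s_n,0)$, $t_{2n}=(0,\hat{e}_n)$, which is total in each fibre of $\mathcal{H}\oplus\mathcal{H}_B$. Third, I would apply a modified Gram--Schmidt procedure to $(t_n)$ to produce orthonormal sections $(u_n)$.

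The main obstacle is that naive Gram--Schmidt is discontinuous: if $t_k(b)$ becomes linearly dependent on $u_1(b),\dots,u_{k-1}(b)$ at some $b_0$, then normalising the residual produces a jump. The decisive trick, and the reason the trivial summand $\mathcal{H}_B$ is needed, is that $\mathcal{H}_B$ supplies an \emph{infinite reservoir} of directions which are genuinely orthogonal everywhere. Concretely, before normalising at step $k$ I would replace the residual $r_k:=t_k-\sum_{j<k}\langle t_k,u_j\rangle u_j$ by $r_k+\varphi_k\cdot (0,\hat{e}_{N(k)})$, where $\hat{e}_{N(k)}$ is an unused basis vector from $\mathcal{H}_B$ (chosen orthogonal to all previous $u_j$) and $\varphi_k\colon B\to[0,1]$ is a continuous function which is strictly positive exactly where $\|r_k\|$ becomes small. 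This guarantees that the perturbed residual is nowhere zero, has a continuous norm bounded away from zero on each point, and hence its normalisation $u_k$ is a continuous section. Since $\hat{e}_{N(k)}$ is introduced with indices going to infinity, every $(s_n,0)$ and every $(0,\hat{e}_n)$ is eventually captured in $\overline{\mathrm{span}\{u_j\}_j}$, so the family remains fibrewise total.

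Once $(u_n)$ is constructed, property (iv) of Definition \ref{defi_field} shows that $\Phi$ and $\Phi^{-1}$ map continuous sections to continuous sections: expansions $\sum_n\langle x(\cdot),u_n(\cdot)\rangle_\cdot \hat{e}_n$ approximate uniformly on neighbourhoods by finite partial sums, which are manifestly continuous. This gives the desired isomorphism $\mathcal{H}\oplus\mathcal{H}_B\cong \mathcal{H}_B$, concluding the proof.
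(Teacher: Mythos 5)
Your overall strategy --- producing a fibrewise orthonormal basis of $\mathcal{H}\oplus\mathcal{H}_B$ consisting of continuous sections via a perturbed Gram--Schmidt, with the trivial summand supplying an infinite reservoir of everywhere-orthogonal directions --- is the right idea and is essentially the Dixmier--Douady strategy. The genuine gap is in Step~1. You assert that a continuous field of Hilbert spaces with separable fibres over a paracompact base admits a \emph{countable} fibrewise total family $(s_n)\subset\Gamma$, but neither half of your sketch delivers this. First, choosing sections with $\{s_n(b_0)\}_n$ dense in $H(b_0)$ does not obviously give a family that remains fibrewise total on a neighbourhood of $b_0$: the fibre dimension is only lower semicontinuous, continuous Hilbert fields are in general \emph{not} locally trivial, and nothing in Definition~\ref{defi_field} forces totality at a point to persist nearby, so ``local sections hitting countable dense subsets'' need not do what you want even locally. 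Second, even granting local totality, a locally finite open cover of a paracompact space can be uncountable; multiplying by a subordinate partition of unity produces a family indexed by the cover, not by $\mathbb{N}$, and you would additionally need $B$ to be Lindel\"of. For the applications in this paper $B$ is a compact manifold, so second countability rescues the argument, but Théorème~4 of Dixmier--Douady is asserted for arbitrary paracompact $B$, and there one must handle a possibly uncountable fundamental family and use local finiteness in a more essential way than a countable reduction.

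Two smaller points. In Step~3 the justification ``every $(s_n,0)$ and every $(0,\hat e_n)$ is eventually captured'' hides an infinite regress: one has $t_k(b)\in\operatorname{span}(u_1(b),\dots,u_k(b),\hat e_{N(k)}(b))$, but $\hat e_{N(k)}$ reappears as a later $t_{k'}$ whose residual can again vanish, pushing the needed vector to $\hat e_{N(k')}$, and so on forever. Totality does hold, but the clean argument is different: if $w\perp u_j(b)$ for all $j$, unwinding the recursion gives $\langle w,t_k(b)\rangle=\pm\Bigl(\prod_{j<m}\varphi_{k^{(j)}}(b)\Bigr)\langle w,\hat e_{N(k^{(m-1)})}(b)\rangle$, and since the $\hat e_{N(k^{(m)})}(b)$ form an orthonormal sequence they converge weakly to $0$, forcing $\langle w,t_k(b)\rangle=0$ for all $k$ and hence $w=0$. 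Finally, $H$ is assumed infinite-dimensional but not separable; one should first write $H=H_0\oplus H_1$ with $H_0$ separable infinite-dimensional, prove $\mathcal{H}\oplus(B\times H_0)\cong B\times H_0$, and then add $B\times H_1$ to both sides.
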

 
\subsubsection{Morphisms of Continuous Fields}
Now we are able to define a morphism of continuous fields of Hilbert spaces.
 
\begin{defi}\label{homomorphism}
Let $B$ be a topological space, $\mathcal{H}=((H(b))_{b\in B},\Gamma)$ be a continuous field of Hilbert space (resp. pre-Hilbert space) and let $\mathcal{H'}=((H'(b))_{b\in B},\Gamma')$ be a continuous field of Hilbert spaces. Let $\varphi=(\varphi_b)_{b\in B}$ be a family of linear operators $\varphi_b\colon H(b)\rightarrow H'(b)$. The map $\varphi$ is a homomorphism of continuous Hilbert (resp. pre-Hilbert) fields if 
\begin{itemize} 
\item for every $x\in \Gamma\Rightarrow\varphi\circ x\in \Gamma'$ and
\item the map $B\rightarrow \mathbb{C}$, given by $b\mapsto \Vert \varphi_b\Vert_b$ is locally bounded.
\end{itemize}
\end{defi}

Dixmier and Douady defined the term homomorphism differently. Both definitions are equivalent by \cite[Proposition 5]{DD}. 
\begin{lem}\cite[Lemma 2.7]{E2}\label{extenison}
Let $\mathcal{E}=((E(b))_{b\in B}, \Lambda)$ be a continuous field of pre-Hilbert space, $\mathcal{H}=((H(b))_{b\in B},\Gamma)$ a continuous field of Hilbert spaces and $\varphi\colon  \mathcal{E}\rightarrow \mathcal{H}$  a continuous field of pre-Hilbert spaces. 
Then there is a unique homomorphism Hilbert spaces $\overline{\varphi}\colon  \overline{\mathcal{E}} \rightarrow \mathcal{H}$ which coincides on $E(b)$ with $\varphi_b$. Here $ \overline{\mathcal{E}}$ stands for the extension of $\mathcal{E}$ constructed in Lemma \ref{completion fields}.
\end{lem}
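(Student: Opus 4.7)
The plan is to do the obvious thing: extend each operator fibrewise by the bounded linear extension theorem and then check that the resulting family is a homomorphism of continuous Hilbert fields. So first I would use the hypothesis that $b \mapsto \Vert \varphi_b\Vert_b$ is locally bounded to conclude, in particular, that every individual $\varphi_b \colon E(b) \to H(b)$ is a bounded linear operator from a pre-Hilbert space into a Hilbert space. The BLT theorem then gives a unique bounded extension $\overline{\varphi}_b\colon \overline{E(b)}\to H(b)$ with $\Vert\overline{\varphi}_b\Vert = \Vert \varphi_b\Vert_b$. Uniqueness of the whole family $\overline{\varphi}$, once we know it is a homomorphism, then follows immediately: two such extensions agree on the dense subset $E(b) \subset \overline{E(b)}$ for every $b$, so they agree everywhere.

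Next I would check the two conditions of Definition \ref{homomorphism} for the family $\overline{\varphi} = (\overline{\varphi}_b)_{b\in B}$. Local boundedness of $b \mapsto \Vert \overline{\varphi}_b\Vert_b$ is automatic because the fibrewise norms coincide with those of $\varphi$, so the hypothesis transfers verbatim. The nontrivial condition is that continuous sections of $\overline{\mathcal E}$ are mapped to continuous sections of $\mathcal H$, i.e.\ that $\overline{\varphi}\circ x \in \Gamma$ for every $x\in \overline{\Lambda}$, where $\overline{\Lambda}$ denotes the sections of the completion produced by Lemma \ref{completion fields}.

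For this I would exploit the defining closure property (iv) of $\Gamma$ in Definition \ref{defi_field}. Fix $x\in\overline{\Lambda}$ and $b_0\in B$, and pick $\varepsilon>0$. By the construction of $\overline{\Lambda}$ (the content of Lemma \ref{completion fields}) there exist a neighbourhood $U$ of $b_0$ and $y\in\Lambda$ with $\sup_{a\in U}\Vert x(a)-y(a)\Vert_a < \varepsilon$. Shrinking $U$ if necessary, local boundedness gives a constant $M$ with $\Vert \overline{\varphi}_a\Vert_a \leq M$ for all $a\in U$. Then
\[
\sup_{a\in U}\bigl\Vert (\overline{\varphi}\circ x)(a) - (\varphi\circ y)(a)\bigr\Vert_a
= \sup_{a\in U} \bigl\Vert \overline{\varphi}_a(x(a)-y(a))\bigr\Vert_a \leq M\varepsilon.
\]
Because $\varphi$ is a homomorphism of pre-Hilbert fields we have $\varphi\circ y \in \Gamma$, so property (iv) applied to $\Gamma$ yields $\overline{\varphi}\circ x \in \Gamma$, which is what we needed.

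The main obstacle I anticipate is purely bookkeeping: making sure the approximation in condition (iv) passes cleanly through the operator, which requires the uniform bound on $\Vert \overline{\varphi}_a\Vert_a$ on a single neighbourhood of $b_0$. As long as one shrinks $U$ so that local boundedness gives a genuine uniform bound, everything else is formal; there is no subtle analytic point beyond the BLT theorem applied fibrewise.
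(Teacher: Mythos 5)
The paper does not actually prove Lemma \ref{extenison}; it cites it from \cite[Lemma 2.7]{E2}, so there is no in-text proof to compare against. Your argument is the standard and correct one: extend fibrewise by BLT, observe that uniqueness and local boundedness are automatic, and verify that $\overline{\varphi}$ carries continuous sections of $\overline{\mathcal E}$ into $\Gamma$ by passing a local uniform approximation through the uniformly bounded family of operators and invoking closure property (iv) of Definition \ref{defi_field}.

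The one place where you lean slightly on something not made explicit in the paper is the claim that every $x\in\overline{\Lambda}$ admits, near each $b_0$, a \emph{uniform} approximation by a section from $\Lambda$ itself (rather than from the larger set $\overline{\Lambda}$). This is indeed how the completion in Lemma \ref{completion fields} is characterised---the sections of the completion are precisely those locally uniformly approximable by $\Lambda$, which one then checks is the unique extension making the axioms hold---but since the paper also only cites Lemma \ref{completion fields} without proof, it would be worth flagging that you are using this characterisation, not just the bare existence-and-uniqueness statement. Once that is in place, the chain $\Vert \overline{\varphi}_a(x(a)-y(a))\Vert_a\le M\varepsilon$ together with $\varphi\circ y\in\Gamma$ and property (iv) closes the argument cleanly, and no further analytic subtleties arise.
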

\begin{bsp}\label{trivial bundle homo}
Let $\mathcal{H}_B$ be the trivial bundle of Example \ref{trivial bundle}. A fibre-wise linear map $\varphi\colon  \mathcal{H}_B\rightarrow \mathcal{H}_B$ is uniquely defined by a map $F\colon B\rightarrow \mathcal{B}(H)$ where $\mathcal{B}(H)$ is the set of linear, continuous operators on $H$. We give $\mathcal{B}(H)$ the weakest topology so that the evaluation $ev\colon  \mathcal{B}(H)\times H\rightarrow H$ is continuous. Then $\varphi$ is a homomorphism if and only if $F$ is continuous.
\end{bsp}
As a second example we consider the fibre-wise Dirac operator. 
\begin{bsp}\label{fiberwise Dirac morphism}
We are in the situation of Example \ref{Fiberwise Dirac continuous field}. We consider now the family of fibre-wise Dirac operators $((D_X)_b)_{b\in B}$ explained in Section \ref{fibre-wise Dirac operators}. Via functional calculus we associate to $(D_X)_b$ a bounded operator \[F_b\vcentcolon=\frac{(D_X)_b}{\sqrt{1+((D_X)_b)^2}}\colon L^2(V_b,M_b)\rightarrow L^2(V_b,M_b)\](compare Section \ref{fibre-wise Dirac operators}). The function $b\mapsto \Vert F_b\Vert_b$ is globally bounded by $1$. \\
Let $s\in \Lambda$ be a smooth section of the bundle $V\rightarrow M$. Then $\frac{D_X}{\sqrt{1+D_X^2}}(s)$ is again a smooth section. Hence, $\frac{D_X}{\sqrt{1+D_X^2}}(s)$ is a continuous section of the continuous field $L^2_B(M,V)$. By Lemma \ref{extenison} the family $\{F_b\} $ defines a homomorphism of the continuous fields $L^2_B(M,V)\rightarrow L^2_B(M,V)$.
\end{bsp}
\begin{defi}
We call an endomorphism $(\varphi_b)_{b\in B}$ of continuous Hilbert fields invertible (resp. adjointable) if the linear maps $\varphi_b$ are invertible (resp. adjointalbe) for every $b\in B$ and the collection $(\varphi_b^{-1})$ (resp. $(\varphi_b^\ast))$ is a morphism.\\
An invertible endomorphism between continuous fields of Hilbert spaces is called isomorphism. 
\end{defi}
\begin{bsp}\label{fiberwise Dirac selfadjoint}
By Example \ref{fiberwise Dirac morphism} the fibre-wise Dirac operator $D_X$ gives rise to a morphism of continuous Hilbert spaces $(F_b)_{b\in B}$. Since every operator $(D_X)_b$ is self-adjoint, the bounded operators $F_b$ are self-adjoint as well. Hence, the operators $(F_b)_{b\in B}$ form a self-adjoint morphism of continuous Hilbert fields. 
\end{bsp}
We see that the theory of continuous fields of Hilbert spaces is a good theory for the fibre-wise situation.\\

Ebert extended the functional calculus theorem for self-adjoint operators to the field case.  
\begin{thm}(Functional calculus)\cite[Theorem 2.30]{E2}\label{functional calculus}
Let $\mathcal{H}=((H(b))_{b\in B},\Gamma)$ be a continuous field of Hilbert spaces, $\varphi=(\varphi_b)\colon \mathcal{H}\rightarrow \mathcal{H}$ be a self-adjoint morphism and $f\colon \mathbb{R}\rightarrow \mathbb{R}$ be a continuous bounded map. Then the collection $(f(\varphi_b))_{b\in B}$ is a morphism of continuous Hilbert spaces.
\end{thm}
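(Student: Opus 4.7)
The plan is to check directly the two defining properties of a morphism from Definition~\ref{homomorphism}: that $b \mapsto \Vert f(\varphi_b)\Vert_b$ is locally bounded, and that $f(\varphi)\circ x \in \Gamma$ for every $x \in \Gamma$. The first property is immediate from the scalar spectral theorem: since each $\varphi_b$ is self-adjoint and $f$ is bounded, $\Vert f(\varphi_b)\Vert \leq \Vert f\Vert_\infty$ globally, let alone locally. All the real content lies in the second property.

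To verify the section condition, I would fix $b_0 \in B$ and $x \in \Gamma$ and aim to show that $y(b):= f(\varphi_b)x(b)$ can be uniformly approximated on some neighbourhood of $b_0$ by elements of $\Gamma$; property (iv) of Definition~\ref{defi_field} then forces $y \in \Gamma$. Because $b \mapsto \Vert \varphi_b\Vert_b$ is locally bounded, there is an $R > 0$ and an open $U \ni b_0$ with $\Vert \varphi_b \Vert \leq R$ on $U$, so that $\sigma(\varphi_b) \subset [-R,R]$ for every $b \in U$. By Stone--Weierstrass, for each $\varepsilon > 0$ there is a polynomial $p$ with $\sup_{t \in [-R,R]} \vert f(t) - p(t)\vert < \varepsilon$, and the scalar functional calculus then gives $\Vert f(\varphi_b) - p(\varphi_b)\Vert < \varepsilon$ for all $b \in U$.

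The key intermediate step is that any \emph{polynomial} function of a morphism is again a morphism. This reduces to showing that the collection of morphisms of $\mathcal{H}$ is closed under scalar multiplication, sum, and composition, which I would establish by unpacking Definition~\ref{homomorphism}: sums preserve both bullet points trivially, and for compositions $\psi \circ \varphi$ one uses that $\psi$ sends $\Gamma$ into $\Gamma'$, together with $\Vert \psi_b \varphi_b \Vert \leq \Vert \psi_b \Vert \cdot \Vert \varphi_b\Vert$, to get the two required properties. Granting this, $p(\varphi)$ is a morphism, so $y_p(b):= p(\varphi_b)x(b)$ lies in $\Gamma$. Shrinking $U$ so that the continuous function $b \mapsto \Vert x(b)\Vert_b$ is bounded by some $M$ on $U$ (using property (iii)), I obtain $\sup_{b \in U}\Vert y(b) - y_p(b)\Vert_b \leq \varepsilon M$. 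Since $\varepsilon$ was arbitrary, property (iv) yields $y \in \Gamma$, completing the argument.

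The main obstacle I expect is the algebraic closure step: verifying cleanly that morphisms are stable under composition. It is conceptually routine but needs a small argument that composing two section-preserving, locally norm-bounded families produces another such family. Once that is in place, the Stone--Weierstrass approximation combined with property (iv) of a continuous field mechanically yields the result, and the proof is essentially the standard $C^\ast$-algebraic construction of the functional calculus, lifted pointwise to the continuous-field setting.
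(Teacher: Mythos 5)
The paper itself gives no proof of this statement; it is cited directly from Ebert~\cite[Theorem~2.30]{E2}, so there is no internal argument for your attempt to be compared against. Your proposal is correct and is the natural route: reduce to polynomials via Stone--Weierstrass on an interval $[-R,R]$ containing the spectra of $\varphi_b$ for $b$ near a fixed $b_0$ (such an $R$ exists by local boundedness of $b\mapsto\Vert\varphi_b\Vert_b$), verify that polynomials in a morphism are again morphisms, and pass to the uniform limit using the local approximation property (iv) of Definition~\ref{defi_field}. The one small thing your sketch of the algebraic closure step leaves implicit is the constant term of a polynomial: it corresponds to a scalar multiple of the identity endomorphism, so alongside closure under sums, scalar multiples and compositions you also need that $\mathrm{id}_{\mathcal{H}}$ is a morphism, which is trivially true since $\mathrm{id}\circ x = x\in\Gamma$ and $\Vert\mathrm{id}_b\Vert=1$. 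With that noted, everything else in your outline is sound, including the need to bound $\Vert x(b)\Vert_b$ locally (via property (iii)) before applying (iv).
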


\subsubsection{Compact and Fredholm homomorphism of Continuous fields}
The notion \em compact operator \em is more difficult to formulate. We follow here Dixmier and Douady \cite[Chapter 22]{DD} and Ebert \cite{E1}.\\

Let $\mathcal{H}=((H(b))_{b\in B},\Gamma)$ be a continuous field of Hilbert spaces. Let $\mathcal{K}(H(b))$ be the Banach space of compact operators on $H(b)$. For $x,y\in \Gamma$ we define the operators $\theta_{x,y}$ of rank $\leq 1$: \[\theta_{x,y}\colon  \mathcal{H}\rightarrow \mathcal{H}, \hspace{1cm} (b,v)\mapsto \left(b,\langle x(b),v\rangle_b\ y(b)\right)\] where $v\in H(b)$. Define \[\Lambda\vcentcolon=span\{\theta_{x,y}|x,y\in \Gamma\}.\]
\begin{defi}\label{compact defi}
Let $\mathcal{H}=((H(b))_{b\in B},\Gamma)$ be a continuous field of Hilbert spaces. The homomorphism $\varphi\colon  \mathcal{H}\rightarrow \mathcal{H}$ is called compact if for every $b\in B$ and $\varepsilon >0$ there is a neighbourhood $U\subset B$ of $b$ and $G\in \Lambda$ such that $\Vert \varphi_y-G_y\Vert_y \leq \varepsilon$ holds for every $y\in U$.
\end{defi}
\begin{lem}\cite[Remark 2.16]{E1}\label{compact lem}
Let $\mathcal{H}=((H(b)),\Gamma)$ be a continuous field of Hilbert spaces, let $\varphi, \psi\colon  \mathcal{H}\rightarrow \mathcal{H}$ be two homomorphisms such that $\varphi$ is compact and $\psi$ is adjointable.
Then $\psi\circ \varphi$ and $\varphi\circ \psi$ are compact. 

\end{lem}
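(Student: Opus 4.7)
The plan is to reduce the statement to computing how $\psi$ and $\psi^\ast$ act on the rank-one generators $\theta_{x,y}$ of $\Lambda$, and then to upgrade a norm approximation of $\varphi$ to a norm approximation of $\psi\circ\varphi$ and $\varphi\circ\psi$ using the local bound on $\|\psi_b\|_b$ that is built into the definition of a homomorphism.

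First I would verify the two algebraic identities
\[
\psi\circ\theta_{x,y}=\theta_{x,\,\psi\circ y},\qquad
\theta_{x,y}\circ\psi=\theta_{\psi^\ast\circ x,\,y},
\]
which follow immediately from the defining formula $\theta_{x,y}(b,v)=(b,\langle x(b),v\rangle_b\,y(b))$ together with the pointwise identity $\langle x(b),\psi_b v\rangle_b=\langle \psi_b^\ast x(b),v\rangle_b$. Because $\psi$ is a homomorphism, $\psi\circ y\in\Gamma$, and because $\psi$ is adjointable, $\psi^\ast\circ x\in\Gamma$. Hence both right-hand sides lie in $\Lambda$, and extending by linearity gives $\psi\circ\Lambda\subset\Lambda$ and $\Lambda\circ\psi\subset\Lambda$.

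For the compactness approximation, fix $b_0\in B$ and $\varepsilon>0$. Local boundedness of $b\mapsto\|\psi_b\|_b$ yields a neighbourhood $U_0$ of $b_0$ with $\|\psi_y\|_y\le C$ for some $C\ge 1$. By compactness of $\varphi$ (Definition \ref{compact defi}), after shrinking $U_0$ to a smaller $U$ one can find $G\in\Lambda$ with $\|\varphi_y-G_y\|_y\le\varepsilon/C$ for every $y\in U$. Submultiplicativity of the operator norm then gives, on $U$,
\[
\|(\psi\circ\varphi)_y-(\psi\circ G)_y\|_y\le\|\psi_y\|_y\,\|\varphi_y-G_y\|_y\le\varepsilon,
\]
and $\psi\circ G\in\Lambda$ by the first identity above. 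The argument for $\varphi\circ\psi$ is symmetric: the same $G$ satisfies $\|(\varphi\circ\psi)_y-(G\circ\psi)_y\|_y\le\|\varphi_y-G_y\|_y\,\|\psi_y\|_y\le\varepsilon$, and $G\circ\psi\in\Lambda$ by the second identity.

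The main (and really only) subtle point is that adjointability of $\psi$, rather than merely $\psi$ being a morphism, is essential for the identity $\theta_{x,y}\circ\psi=\theta_{\psi^\ast\circ x,y}$: without knowing that $\psi^\ast$ is a morphism, $\psi^\ast\circ x$ need not be a continuous section and the candidate approximant $G\circ\psi$ would fall outside $\Lambda$. Once that point is settled, the proof is a straightforward field-theoretic version of the classical fact that compact operators form a two-sided ideal inside the bounded operators.
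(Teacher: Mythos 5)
Your proof is correct. The paper itself does not prove this lemma but merely cites Ebert's Remark 2.16, and your argument is the natural one: the identities $\psi\circ\theta_{x,y}=\theta_{x,\psi\circ y}$ and $\theta_{x,y}\circ\psi=\theta_{\psi^\ast\circ x,y}$ show that $\Lambda$ is preserved under composition with $\psi$ on either side (the latter using adjointability so that $\psi^\ast\circ x\in\Gamma$), and the local bound on $\|\psi_b\|_b$ together with submultiplicativity turns a local $\varepsilon/C$-approximation of $\varphi$ by $G\in\Lambda$ into a local $\varepsilon$-approximation of $\psi\circ\varphi$ by $\psi\circ G$ and of $\varphi\circ\psi$ by $G\circ\psi$. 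You also correctly pinpoint where adjointability is genuinely used, namely only for the right composition $\varphi\circ\psi$; for $\psi\circ\varphi$ it suffices that $\psi$ is a homomorphism.
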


\begin{bsp}\label{trivial compact}
Let $B$ be a topological space, $H$ a Hilbert space. We consider the trivial continuous field of Hilbert spaces $\mathcal{H}_B$ of Example \ref{trivial bundle}. Let $\varphi\colon  \mathcal{H}_B\rightarrow\mathcal{H}_B$ be a morphism. This morphism is uniquely defined by a map $F\colon B\rightarrow \mathcal{B}(H)$ (compare Example \ref{trivial bundle homo}). We claim that $\varphi$ is compact if and only if $F\colon  B\rightarrow \mathcal{K}(H)$ is a continuous map (The topology on the right hand side is given by the operator norm topology.):\\
We first consider the case if $F$ is continuous. Let $\{v_i\}$ be a complete orthonormal system of $H$. Define $s_i,t_i\in \Gamma$ by $s_i(b)\vcentcolon=(b,v_i)$ and $t_i(b)\vcentcolon=(b,F(b)(v_i))$ ($s_i,t_i$ are elements of $\Gamma$ by Remark \ref{remark topology}). We define \[\theta_k\vcentcolon=\sum_{i=1}^k \theta_{s_i,t_i}\in \Lambda.\] Then $\theta_k$ approximates $\varphi$ globally, hence $\varphi$ is compact. \\
If $\varphi$ is compact, let $G\colon \mathcal{H}_B\rightarrow \mathcal{H}_B\in \Lambda$ be a map defined by a map $\tilde{G}\colon  B\rightarrow \mathcal{K}(H)\subset\mathcal{B}(H)$ (compare Example \ref{trivial bundle homo}). An easy computation shows that $\tilde{G}$ is continuous (the topology on $\mathcal{K}(H)$ is induced by the norm topology). Let $\varepsilon >0$ and $b\in B$. Since $ \varphi$ is compact there is a neighbourhood $U$ of $b$ and $G\in \Lambda$, 
such that for all $y\in U$ the equation $\Vert \varphi_y-G_y\Vert< \varepsilon/3$. Since $G$ is continuous there is a neighbourhood $V$ of $b$ such that $\Vert G_b-G_y\Vert <\varepsilon/3$ for all $y\in V$. Finally we compute for $
y\in U\cap V$ \[\Vert \varphi_b-\varphi_y\Vert \leq \Vert \varphi_b-G_b\Vert +\Vert G_b-G_y\Vert +\Vert G_y-\varphi_y\Vert <\frac{\varepsilon}{3}+\frac{\varepsilon}{3}+\frac{\varepsilon}{3}=\varepsilon.\]
\end{bsp}
\begin{defi}
Let $\mathcal{H}=((H(b))_{b\in B},\Gamma)$ be a continuous field of Hilbert spaces and $F\colon \mathcal{H}\rightarrow \mathcal{H}$ be a morphism. We say $F$ is a Fredholm family if there is a morphism $G\colon  \mathcal{H}\rightarrow \mathcal{H}$ such that $FG-1$ and $GF-1$ are compact.
\end{defi}

\begin{lem}\label{fredholm local}\cite[Lemma 2.16]{E1}
Let $ \mathcal{H}=((H(b))_{b\in B},\Gamma)$ be a continuous field of Hilbert space over a paracompact space $B$. Let $F\colon \mathcal{H}\rightarrow \mathcal{H}$ be a morphism. Assume there is an open cover $\mathcal{U}$ of $B$ such that $F|_U$ is a Fredholm family for every $U\in \mathcal{U}$. Then $F$ is Fredholm. 
\end{lem}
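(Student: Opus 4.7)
The approach is the standard partition-of-unity gluing of local parametrices. First, paracompactness of $B$ lets me refine $\mathcal U$ to a locally finite open cover $(V_i)_{i\in I}$ with $V_i\subset U_{\alpha(i)}\in\mathcal U$, and to pick a subordinate partition of unity $(\chi_i)_{i\in I}$ with $\mathrm{supp}(\chi_i)\subset V_i$. The hypothesis supplies, on each $U_{\alpha(i)}$, a morphism $G_{\alpha(i)}\colon\mathcal H|_{U_{\alpha(i)}}\to\mathcal H|_{U_{\alpha(i)}}$ with $F|_{U_{\alpha(i)}}G_{\alpha(i)}-1$ and $G_{\alpha(i)}F|_{U_{\alpha(i)}}-1$ compact; restricting to $V_i$ produces local parametrices $G_i$.

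I would then define the global parametrix
\[G\vcentcolon=\sum_{i\in I}\chi_i G_i,\]
where each term is extended by zero outside $V_i$. Because $\chi_i$ is a continuous scalar function vanishing on a neighbourhood of $B\setminus V_i$, axioms (i) and (iv) of Definition \ref{defi_field} guarantee that $\chi_i G_i$ is a global morphism of $\mathcal H$, and local finiteness makes $G$ a bona fide morphism $\mathcal H\to\mathcal H$. Since multiplication by a scalar function commutes fibrewise with $F$ and $\sum_i\chi_i\equiv 1$, one gets
\[FG-1=\sum_i\chi_i(FG_i-1),\qquad GF-1=\sum_i\chi_i(G_iF-1).\]

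It remains to verify that each summand, extended by zero, is compact on $B$ and that a locally finite sum of compact morphisms is compact. For the former, approximating a compact morphism $K$ on $V_i$ locally by an element of $\Lambda$ of the form $\sum\theta_{x_j,y_j}$ in the sense of Definition \ref{compact defi}, one uses the identity $\chi_i\theta_{x_j,y_j}=\theta_{\chi_i x_j,y_j}$, together with the observation that the extension by zero of $\chi_i x_j$ still lies in $\Gamma(B)$ (axiom (iv) of Definition \ref{defi_field}); this produces local approximations to the zero-extension of $\chi_i K$ by genuine global elements of $\Lambda$, so $\chi_i K$ is compact on $B$. For the latter, local finiteness reduces the sum near each point to a finite one, which is compact as a finite sum of compact morphisms. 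Together these show $FG-1$ and $GF-1$ are compact, so $F$ is Fredholm.

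The main obstacle is precisely this cutoff-and-extension step: one must check that truncating a morphism that is compact on $V_i$ by a scalar bump function supported in $V_i$ yields a morphism that is compact as a global morphism of $\mathcal H$. Everything else is formal bookkeeping with the partition of unity, exactly parallel to the classical Fredholm parametrix construction.
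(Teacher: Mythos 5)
Your strategy---glue local parametrices via a partition of unity and check the cutoff errors are compact---is exactly the standard argument, and it is the one the paper is implicitly relying on by citing Ebert. The construction of $G=\sum_i\chi_iG_i$ is sound: the zero-extension of $\chi_iG_i$ is a morphism because $\chi_i$ vanishes on a neighbourhood of $B\setminus V_i$ (so axiom (iv) of Definition~\ref{defi_field} applies), local finiteness makes $G$ a morphism, and the identities $FG-1=\sum_i\chi_i(FG_i-1)$, $GF-1=\sum_i\chi_i(G_iF-1)$ are correct.

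The step you flag as the main obstacle does, however, have a small but genuine gap as written. You use the identity $\chi_i\theta_{x_j,y_j}=\theta_{\chi_ix_j,y_j}$ and observe that $\chi_ix_j$ extends by zero to a global section; but for $\theta_{\chi_ix_j,y_j}$ to be a genuine element of $\Lambda$ in the sense of Definition~\ref{compact defi}, \emph{both} arguments must be global continuous sections, and $y_j$ is only defined over $V_i$, with no reason to extend by zero. The standard fix is to split the bump function: since $\chi_i\geq 0$, the function $\sqrt{\chi_i}$ is continuous with the same support, and
\[\chi_i\,\theta_{x_j,y_j}=\theta_{\sqrt{\chi_i}\,x_j,\ \sqrt{\chi_i}\,y_j}.\]
Now both $\sqrt{\chi_i}\,x_j$ and $\sqrt{\chi_i}\,y_j$ vanish outside $\mathrm{supp}(\chi_i)\subset V_i$, hence extend by zero to global elements of $\Gamma$, and the right-hand side is a genuine element of $\Lambda$. (Alternatively, one may truncate $y_j$ by an auxiliary bump function equal to $1$ near the relevant point and supported in $V_i$.) With this correction the compactness of each $\chi_i(FG_i-1)$---and then, by local finiteness, of the full sums---goes through, and your proof is complete.
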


\begin{bsp}(compare \cite{E2})\label{fiberwise dirac Frdholm family}
Again we consider the fibre-wise Dirac operator of Example \ref{Fiberwise Dirac continuous field}. By Example \ref{fiberwise Dirac morphism} the fibre-wise Dirac give rise to a morphism of continuous Hilbert fields $(F_b)_{b\in B}$. We claim that $(F_b)_{b\in B}$ is a Fredholm family. Moreover the family $(D_b^2-1)_{b\in B}$ is a compact family.\\
To prove this, we only prove the second statement. The first statement follows using $F_b^2-1=\frac{-1}{D_b^2+1}$ from the definition. By Lemma \ref{fredholm local}, compactness and Fredholmness are local properties (in $B$). Therefore we can assume, that $M\rightarrow B$ is a trivial bundle, i.e. $M=X\times B$.\\
We first prove the claim if  $V=\mathbb{C}^k\times X\times B$ is a trivial bundle. Then the continuous fields of Banach spaces $W^1_B(M,V)=B\times W^1(X,\mathbb{C}^k\times X)$ and $L^2_B(M,V)=B\times L^2(X,\mathbb{C}^k\times X)$ are trivial (in the sense of Example \ref{trivial bundle}). We consider the inclusion $W^1_B(M,V)\rightarrow L^2_B(M,V)$ which is by Example \ref{trivial bundle homo} a morphism of (trivial) continuous fields. By Example \ref{trivial compact} and the Theorem of Rellich the inclusion is a compact morphism. Then $F^2-1$ is the composition \[L^2_B(M,V)\xrightarrow{\frac{1}{D^2+1}} W^1_B(M,V)\xrightarrow{inclusion} L^2_B(M,V)\] of an adjointable family and a compact family. By Lemma \ref{compact lem} $F^2-1$ is a compact family. \\
If $V$ is not trivial: Since $B$ and $X$ are compact, we can choose a finite, open cover $\mathcal{U}=\{U_i\times V_i|i=1\dots l, U_i\subset B, V_i\subset X\}$ such that $V|_{U_i\times V_i}$ is trivial. Let $\varphi_i$ be a partition of unity subordinate to $\mathcal{U}$. We now consider the operators $G_{b,i}\vcentcolon=\varphi_i(b) \cdot (\frac{1}{D_b^2+1})$ which have support in $L^2(U_i\times V_i,V|_{U_i\times V_i})$. Since $V$ is trivial over $U_i\times V_i$, we can use the first part to see that $(G_{b,i})_{b\in B}$ is a compact family for every $i$. The claim follows since $\frac{1}{D_b^2-1}=\sum_{i=1}^l G_{b,i}.$
\end{bsp}

\subsection{Continuous Fields and $K$-Theory}\label{topologie}
Using the methods of Dixmier and Douady we will finally recall a model for $K^1$ such that the fibre-wise Dirac operator $((D_X)_b)_{b\in B}$ represents an element in this model (compare \cite[Appendix A]{E1}). Using Theorem \ref{trivial theorem} we identify the spaces $L^2(M_b,V_b)$. After this identification we will see, that the map \begin{equation}\label{55} B\rightarrow Fred^1, \hspace{1cm} b\mapsto F_b\vcentcolon=\frac{(D_X)_b}{\sqrt{1+((D_X)_b)^2}}\end{equation} is not automatically continuous using the norm topology on the right hand side. We will use a weaker topology such that $b\rightarrow F_b$ is continuous. This topology is strong enough to represent $K$-theory for compactly generated, para-compact spaces. We follow in this section \cite{E1}. 
\begin{defi}
Let $B$ be a compact space. 
\begin{itemize}
\item We define the set of cycles $\{ (\mathcal{H}, \varphi)\}$ where $\mathcal{H}\vcentcolon=((H(b))_{b\in B},\Gamma)$ is a continuous fields of Hilbert spaces (with separable infinite-dimensional fibres) over $B$ and $\varphi\colon \mathcal{H}\rightarrow \mathcal{H}$ is a self-adjoint homomorphism of continuous fields such that $\varphi^2-1$ is compact (hence by definition $ \varphi$ is Fredholm).
\item The set of cycles is, using direct sum, an abelian monoid.
\item Two cycles $(\mathcal{H}_1, \varphi_1)$ and $(\mathcal{H}_2, \varphi_2)$ are called homotopic if there is a cycle $(\mathcal{E}, \psi)$ on $B\times [0,1]$ such that there are isomorphisms $a_i\colon \mathcal{E}|_{B\times\{i\}}\rightarrow  \mathcal{H}_i$ so that \[\varphi_i=a_i\circ \psi \circ (a_i)^{-1}.\]
\item We call a cycle $(\mathcal{H}, \varphi)$ acyclic if $\varphi$ is an isomorphism.
\item Let $\mathfrak{F}(B)$ be the set of homotopy classes of cycles over $B$ modulo the submonoid of homotopy classes that contain acyclic representatives.
\end{itemize}
\end{defi} 
\begin{bem}
Using direct sum $\mathfrak{F}(B)$ is a abelian group. The inverse element of  $((H(b)),\Gamma), \varphi)$ is given by $((H(b)),\Gamma), -\varphi)$ (compare \cite[Lemma 2.21]{E1}).
\end{bem}
\begin{bsp}\label{fiberwise Dirac fin}
We consider the fibre-wise Dirac operator described in Example \ref{Fiberwise Dirac continuous field}. The pair $(L_B^2(M,V),(F_b)_{b\in B})$ represents by Examples \ref{fiberwise Dirac morphism}, \ref{fiberwise Dirac selfadjoint} and \ref{fiberwise dirac Frdholm family} an element in $\mathfrak{F}(B)$.

\end{bsp}
Next we describe a topology on $Fred^1$ such that the map \eqref{55} is continuous. This topology was used for example in \cite{Bu} and \cite{E1}.
\begin{defi}\label{weak}
We define $\widetilde{Fred}$: As a set $\widetilde{Fred}=Fred^1$. We give $\widetilde{Fred}$ the weakest topology such that the evaluation and the map $\widetilde{Fred}\rightarrow \mathcal{K}(H)$ given by $F\mapsto F^2-1$ are continuous. Here $\mathcal{K}(H)$ is the set of compact operators carrying the norm topology.
\end{defi}

For a compact manifold $B$ and an infinite dimensional separable Hilbert space we define a map
\begin{equation}
\label{0} \alpha\colon [B,\widetilde{Fred}]\rightarrow \mathfrak{F}(B)
\end{equation}
as follows:
We send a map $A\colon  B\rightarrow\widetilde{Fred}$ to the cycle $(\mathcal{H}_B=B\times H, \varphi\colon (b,x)\mapsto (b,A_b(x)))$. The map $\varphi$ is a Fredholm family by Example \ref{trivial bundle homo} and Example \ref{trivial compact}. \\
We will construct a map in the other direction,

\begin{equation}\label{1}\beta\colon \mathfrak{F}(B)\rightarrow [B,\widetilde{Fred}]\end{equation} as follows: Let $J\colon H\rightarrow H$ be a self-adjoint involution and let $(\mathcal{H}=(H(b))_{b\in B},\Gamma), \varphi)\in \mathfrak{F}(B)$. We use Theorem \ref{trivial theorem} to get an isomorphism between the trivial field $\mathcal{H}_B=B\times H$ and the direct sum $\mathcal{H}_B\oplus \mathcal{H}$. Define the morphism of trivial fields \[\mathcal{H}_B\cong \mathcal{H}_B\oplus \mathcal{H}\xrightarrow{(J\oplus \varphi)}\mathcal{H}_B\oplus \mathcal{H}\cong \mathcal{H}_B.\]
By definition of a cycle the maps $(J\oplus \varphi)_b\colon H\rightarrow H$ is contained in $\widetilde{Fred}$. The map $b\mapsto (J\oplus \varphi)_b$ is continuous by Example \ref{trivial bundle homo} and Example \ref{trivial compact}. We now see that the topology of $\widetilde{Fred}$ is a natural choice coming from continuous fields of Hilbert spaces. By \cite[Appendix A]{E1} we get the following theorem:

\begin{thm}
The maps $\alpha$ and $\beta$ are well-defined mutually inverse maps.
\end{thm}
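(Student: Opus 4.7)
The plan is to verify well-definedness of $\alpha$ and $\beta$ on the specified equivalence classes and then check that the two compositions are the identity, following the strategy sketched in \cite[Appendix A]{E1}.

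Well-definedness of $\alpha$ is essentially tautological: by Definition \ref{weak}, a continuous map $A\colon B\to\widetilde{Fred}$ is exactly the data that makes $(b,x)\mapsto (b,A_b x)$ a self-adjoint morphism of $\mathcal{H}_B$ with $\varphi^2-1$ compact (combining Example \ref{trivial bundle homo} for the evaluation component and Example \ref{trivial compact} for the compactness component). A continuous homotopy $B\times[0,1]\to \widetilde{Fred}$ produces a cycle over $B\times[0,1]$ interpolating between the endpoint cycles, and a map with $A_b$ invertible for every $b$ produces an acyclic cycle, so the assignment descends to $[B,\widetilde{Fred}]\to \mathfrak{F}(B)$.

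For $\beta$ one must check independence of the two choices hidden in the construction: the self-adjoint involution $J$ on $H$ and the stabilisation isomorphism $\mathcal{H}_B\cong \mathcal{H}_B\oplus\mathcal{H}$ provided by Theorem \ref{trivial theorem}. Two such isomorphisms differ by an automorphism of $\mathcal{H}_B$, i.e.\ a continuous map $B\to U(H)$; since $U(H)$ is contractible by Kuiper's theorem, any two choices are connected by a path of unitaries, which produces a homotopy between the associated maps to $\widetilde{Fred}$ (the homotopy stays in $\widetilde{Fred}$ because its defining topology is preserved under unitary conjugation). Any two infinite-dimensional self-adjoint involutions are unitarily conjugate, so $J$ is unique up to the same kind of homotopy. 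If the starting cycle $(\mathcal{H},\varphi)$ is acyclic, then $J\oplus\varphi$ is pointwise invertible, and such a map represents zero in $K^1(B)=[B,\widetilde{Fred}]$ by the remark after the definition of $K^{-n}$ in the introduction. A homotopy of cycles over $B\times[0,1]$ passes through the construction to give a homotopy of the resulting maps.

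For $\alpha\circ\beta=\mathrm{id}$, begin with a cycle $(\mathcal{H},\varphi)$: by construction $\alpha(\beta([\mathcal{H},\varphi]))$ is represented by $(\mathcal{H}_B\oplus\mathcal{H},\,J\oplus\varphi)$, and the direct summand $(\mathcal{H}_B,J)$ is acyclic because $J^2=1$ makes $J$ invertible, hence disappears in $\mathfrak{F}(B)$. For $\beta\circ\alpha=\mathrm{id}$, begin with $A\colon B\to\widetilde{Fred}$: then $\beta(\alpha([A]))$ is the class of the morphism $J\oplus A$ on $\mathcal{H}_B\oplus\mathcal{H}_B\cong \mathcal{H}_B$, and one removes the $J$-summand by deforming it through invertibles (which are null-homotopic in $\widetilde{Fred}$) and absorbing the resulting stabilisation isomorphism by Kuiper's theorem. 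The main obstacle is the systematic bookkeeping of the spurious summand $(\mathcal{H}_B,J)$: in one direction it must be killed as an acyclic cycle in $\mathfrak{F}(B)$, in the other as a null-homotopic map in $[B,\widetilde{Fred}]$, and in both cases Kuiper's theorem is needed to suppress the ambiguity of the stabilisation isomorphism from Theorem \ref{trivial theorem}.
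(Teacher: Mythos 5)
The paper gives no argument of its own for this theorem; it simply cites \cite[Appendix A]{E1}, so there is no internal proof to compare against. Your sketch follows the standard strategy for such ``concrete cycle model for $K^1$'' results, and the parts concerning well-definedness of $\alpha$ and the identity $\alpha\circ\beta=\mathrm{id}$ are fine.

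Two places need more care. First, you repeatedly invoke Kuiper's theorem to kill the ambiguity in the choice of $J$ and of the stabilisation isomorphism from Theorem \ref{trivial theorem}. But an automorphism of the trivial field $\mathcal{H}_B$ is a \emph{strongly} continuous family of unitaries, not a norm-continuous one, so the relevant statement is the contractibility of $U(H)$ in the strong operator topology. This is not classical Kuiper; it is the Dixmier--Douady result that already underlies Theorem \ref{trivial theorem}. Saying ``Kuiper's theorem'' without qualification leaves a real gap. Second, your treatment of $\beta\circ\alpha=\mathrm{id}$ --- ``remove the $J$-summand by deforming it through invertibles (which are null-homotopic in $\widetilde{Fred}$)'' --- conflates two separate ingredients. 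You need (i) that the subspace of invertibles in the relevant component of $\widetilde{Fred}$ (those whose essential spectrum meets both $+1$ and $-1$) is contractible, so the constant map at $J$ is null-homotopic; and (ii) that block sum endows $\widetilde{Fred}$, and hence $[B,\widetilde{Fred}]$, with an $H$-space structure compatible with $\alpha$ and $\beta$, so that $[J\oplus A]=[J]+[A]=[A]$. Knowing $[J]=0$ alone does not let you peel $J$ off $J\oplus A$ at the level of homotopy classes of maps; the additivity structure is what does the work, and it should be stated explicitly. With these two points supplied, the argument is complete.
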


Finally to connect this to $K$-theory we consider \[id\colon  Fred^1\rightarrow \widetilde{Fred}\] which is a continuous map. By \cite[Theorem 2.22, Theorem 2.23]{E2} or \cite[Theorem 3.1.7]{BER17} or rather \cite{BJS} we get the next theorem:
\begin{thm}\label{k}
The inclusion $Fred^1\rightarrow \widetilde{Fred}$ is a weak homotopy equivalence. Therefore the inclusion induces a bijection $[B,Fred^1]\rightarrow [B,\widetilde{Fred}^1]$ if $B$ is a compact $CW$ complex.
\end{thm}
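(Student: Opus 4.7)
The plan is to show that $\iota\colon Fred^1\to \widetilde{Fred}$ induces bijections $\pi_n(Fred^1)\to \pi_n(\widetilde{Fred})$ for every $n\geq 0$; the second claim (that $[B,Fred^1]\to [B,\widetilde{Fred}]$ is a bijection for compact CW $B$) then follows from Whitehead's theorem applied to a CW replacement, or directly from the natural identifications below applied with $B$ in place of $S^n$.

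First I would reduce both sides to the same $K$-theoretic group. On the one hand, Atiyah-Singer \cite{AS69} supplies a natural bijection $[B,Fred^1]\cong K^1(B)$ for compact $B$. On the other hand, the theorem just proven gives $[B,\widetilde{Fred}]\cong \mathfrak{F}(B)$ via the mutually inverse maps $\alpha,\beta$. Using Theorem \ref{trivial theorem}, any cycle $(\mathcal{H},\varphi)\in \mathfrak{F}(B)$ becomes, after stabilising by a trivial cycle $(\mathcal{H}_B,J)$, isomorphic to a cycle on the trivial field, which corresponds under $\beta$ to a continuous map $B\to \widetilde{Fred}$. A short bookkeeping argument — sending acyclic cycles to the zero class and checking additivity under direct sum — yields a natural isomorphism $\mathfrak{F}(B)\cong K^1(B)$.

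Next I would verify that the triangle
\[
[B,Fred^1]\xrightarrow{\iota_*}[B,\widetilde{Fred}]\xrightarrow{\alpha}\mathfrak{F}(B)\cong K^1(B)
\]
commutes and agrees with Atiyah-Singer's identification. This is essentially tautological: both routes send a norm-continuous family $F\colon B\to Fred^1$ to the trivial-field cycle $\bigl(\mathcal{H}_B,\,(b,x)\mapsto(b,F_b x)\bigr)$, and a norm-continuous homotopy produces a cycle over $B\times[0,1]$ in the sense of Definition of $\mathfrak{F}$. Therefore $\iota_*$ is a bijection for every compact space $B$, and specialising to $B=S^n$ for each $n\geq 0$ shows that $\iota$ is a weak homotopy equivalence.

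The delicate point is the identification $\mathfrak{F}(B)\cong K^1(B)$ and its compatibility with $\iota_*$. One must verify that acyclic cycles indeed represent the zero element in $K^1(B)$, that a homotopy of cycles in the sense above induces a homotopy of Fredholm families through $\widetilde{Fred}$, and that the component structure matches on both sides (the essentially positive/negative components of $Fred^1$ being contractible in either topology, so that only the index-zero component contributes to the comparison). All of this is carried out in the cited works \cite{BER17, BJS}, and invoking those results concludes the argument.
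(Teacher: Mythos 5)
The paper does not actually prove Theorem \ref{k} --- it simply cites \cite[Theorems 2.22, 2.23]{E2}, \cite[Theorem 3.1.7]{BER17}, and \cite{BJS} and records the statement. Your proposal likewise ends by deferring to \cite{BER17, BJS}, so in that sense the final move is the same as the paper's. However, the outline you give to motivate the citation contains a circularity that makes it fail as a proof sketch. You propose to reduce to showing $\mathfrak{F}(B)\cong K^1(B)$ and dismiss this as ``a short bookkeeping argument,'' but this identification is precisely where the whole difficulty of the theorem lives. As you describe it, a cycle $(\mathcal{H},\varphi)\in\mathfrak{F}(B)$ is stabilised via Theorem \ref{trivial theorem} into a cycle on the trivial field and then fed into $\beta$, which produces a continuous map $B\to\widetilde{Fred}$. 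That output is an element of $[B,\widetilde{Fred}]$, \emph{not} a class in $K^1(B)=[B,Fred^1]$: the resulting family need not be norm-continuous, and showing that it is homotopic (through $\widetilde{Fred}$) to a norm-continuous one is exactly the surjectivity of $\iota_*$ that you are trying to prove. So your route $\mathfrak{F}(B)\to K^1(B)$ secretly presupposes the theorem. To break the circle one needs a genuinely different construction of the comparison map, e.g.\ the quasifibration argument of Atiyah--Singer type carried out by Ebert, or the identification with Kasparov $KK(\mathbb{C},C(B))$ in \cite{BJS}; neither is ``bookkeeping.''

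There is also a smaller soft spot: concluding a weak homotopy equivalence from bijections $[S^n,Fred^1]\to[S^n,\widetilde{Fred}]$ of \emph{free} homotopy classes requires knowing that the $\pi_1$-actions are trivial (true here because both spaces are infinite loop spaces, but worth saying), and the claim ``by Whitehead's theorem'' should really be the CW-approximation corollary that a weak equivalence $X\to Y$ induces bijections $[B,X]\to[B,Y]$ for CW $B$, since $Fred^1$ and $\widetilde{Fred}$ are not themselves CW complexes. These are minor compared to the circularity above, but worth repairing if you want the outline to stand on its own rather than as commentary on the cited results.
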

\subsection{Proof of Theorem \ref{obstruction_weak topology}}\label{proof}
In the last section we saw that a fibre-wise Dirac operator over a compact manifold $B$ represents an element in $K^1(B)$. We associate to the fibre-wise Dirac operator a continuous map $F\colon B\rightarrow \widetilde{Fred}$ for an arbitrary infinite dimensional separable Hilbert space $H$. To prove Theorem \ref{obstruction_weak topology} we use the same strategy as in the proof of Theorem \ref{obstruction_strong topology}. All we have to do is to check the compatibility of the arguments with the weaker topology. For this purpose we generalize Lemma \ref{homotopie_stark} and check that the set of invertible operators is open in $\widetilde{Fred}$:

\begin{lem}\label{invertierbar}
Let $\widetilde{Fred}$ be the space explained in \eqref{weak}. Let $\tilde{G}(H)\subset \widetilde{Fred}$ be the subset of invertible operators. Then $\tilde{G}(H)$ is open in $\widetilde{Fred}$.
\end{lem}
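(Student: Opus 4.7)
The plan is to show that for every $F_0 \in \tilde{G}(H)$ one can construct an explicit sub-basic open neighbourhood of $F_0$ lying entirely in $\tilde{G}(H)$. The construction only uses the continuity of $F \mapsto F^2 - 1$ into $\mathcal{K}(H)$ (with its norm topology); the evaluation part of the topology on $\widetilde{Fred}$ is not needed.

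First, I would extract a spectral gap. Since $F_0$ is self-adjoint, Fredholm and invertible with $F_0^2 - 1$ compact, the spectrum of $F_0^2$ is a subset of $\mathbb{R}_{\geq 0}$ consisting of $\{1\}$ together with an at-most-countable set of eigenvalues which can accumulate only at $1$. Because $F_0$ is invertible, $0 \notin \operatorname{spec}(F_0^2)$, so there exists $\delta > 0$ with $\operatorname{spec}(F_0^2) \subset [\delta^2, \infty)$. Equivalently, $F_0^2 \geq \delta^2 \cdot \operatorname{id}_H$ in the sense of self-adjoint operators.

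Next, I would define
\[
U \vcentcolon= \Bigl\{ F \in \widetilde{Fred} \,\Bigl|\, \bigl\|(F^2 - 1) - (F_0^2 - 1)\bigr\|_{\mathcal{K}(H)} < \tfrac{\delta^2}{2}\Bigr\}.
\]
By Definition \ref{weak}, the map $F \mapsto F^2 - 1$ from $\widetilde{Fred}$ to $\mathcal{K}(H)$ is continuous, so $U$ is open and contains $F_0$.

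Finally, I would check that every $F \in U$ is invertible. For $F \in U$ and any $x \in H$, self-adjointness of $F$ gives
\[
\|Fx\|^2 = \langle F^2 x, x\rangle = \langle F_0^2 x, x\rangle + \langle (F^2 - F_0^2)x, x\rangle \geq \bigl(\delta^2 - \tfrac{\delta^2}{2}\bigr)\|x\|^2 = \tfrac{\delta^2}{2}\|x\|^2,
\]
so $F$ is bounded below by $\delta/\sqrt{2}$, hence injective with closed range. By self-adjointness, $\operatorname{range}(F) = (\ker F^\ast)^\perp = (\ker F)^\perp = H$, so $F$ is also surjective and thus invertible. This proves $U \subset \tilde{G}(H)$, finishing the argument. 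No real obstacle is expected; the only point requiring a moment of care is that $\widetilde{Fred}$ carries a weaker topology than the norm topology, but this is precisely why I use only the continuous map $F \mapsto F^2 - 1$ rather than trying to control $F - F_0$ directly.
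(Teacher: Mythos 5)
Your proof is correct and follows essentially the same route as the paper: both reduce to the continuity of $F\mapsto F^2-1$ (equivalently $F\mapsto F^2$) into the norm topology, and both exploit that for self-adjoint $F$, invertibility of $F^2$ forces invertibility of $F$. The only difference is cosmetic: the paper cites the openness of the set of invertibles in $\mathcal{B}(H)$ (Neumann series) and pulls it back under $F\mapsto F^2$, whereas you make that step quantitative by extracting a spectral gap $\delta$ for $F_0^2$ and directly bounding $\|Fx\|$ from below on the resulting sub-basic neighbourhood.
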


\begin{proof}
Let $A\in \tilde{G}(H)$. By definition the map $\widetilde{Fred}\rightarrow \mathcal{B}(H)$ given by $F\mapsto F^2-1$ is continuous where $\mathcal{B}(H)$ are the bounded operators with norm topology. We consider the map $\phi\colon  \widetilde{Fred}\rightarrow \mathcal{B}(H)$ given by $F\mapsto F^2$. This is a continuous map. Since $A$ is invertible, $A^2$ is invertible as well. The invertible operators in $\mathcal{B}(H)$ (with operator norm topology) form an open subset (this follows using the Neumann series). Therefore there is an open neighbourhood $U\subset \mathcal{B}(H)$ of $A^2$ so that every $F\in U$ is invertible. Since $\phi$ is continuous, the subset $\phi^{-1}(U)$ is open in $\widetilde{Fred}$. If $B\in \phi^{-1}(U)$, the square $B^2$ is invertible, e.g. bijective. Therefore $B$ is bijective and hence invertible. The claim follows.
\end{proof}
\begin{lem}\label{homotopie}
Let $f\colon \mathbb{R}\rightarrow \mathbb{R}$ be a continuous function such that there is a constant $0<c\leq 1$ with $f|_{\mathbb{R}\setminus (-c,c)}=id$. Let $B$ be a topological space, $A\colon B\rightarrow \widetilde{Fred}$ be a continuous map. Then the map $f(A)\colon B\rightarrow \widetilde{Fred}$, given by $f(A)_b=f(A_b)$, is a well defined continuous map. Moreover if there is a homotopy $h$ between $f$ and identity with $h|_{(-c,c)\times [0,1]}=id$, then $f(A)$ and $A$ are homotopic.
\end{lem}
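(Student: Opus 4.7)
The strategy is to mirror the proof of Lemma \ref{homotopie_stark}, with extra work to verify both ingredients that define the weaker topology of Definition \ref{weak}: continuity of the evaluation $b\mapsto f(A_b)v$ for each $v$, and norm-continuity of $b\mapsto f(A_b)^2-1$ into $\mathcal{K}(H)$.

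First I would establish well-definedness. Write $f(x)=x+g(x)$ and $f(x)^2-1=(x^2-1)+\eta(x)$, where $g,\eta$ are continuous, supported in $[-c,c]$, and in particular vanish at $\pm 1$. Self-adjointness of $f(A_b)$ is immediate from functional calculus. Compactness of $\eta(A_b)$ follows from the fact that any continuous $\chi$ on $[-\|A_b\|,\|A_b\|]$ with $\chi(\pm 1)=0$ is a uniform limit of polynomials of the form $(x^2-1)q(x)$ (Weierstrass plus a linear correction at $\pm 1$); evaluated at $A_b$ each such polynomial becomes $(A_b^2-1)q(A_b)$, a compact operator, so $\eta(A_b)$ is a norm-limit of compact operators and therefore $f(A_b)^2-1=(A_b^2-1)+\eta(A_b)\in\mathcal{K}(H)$.

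Next I would check continuity. Norm-continuity of $b\mapsto A_b^2-1$ implies that $\|A_b\|^2=\|A_b^2\|$ is locally bounded; fix $b_0$ and a neighborhood $U$ on which $\|A_b\|\leq M$. For the evaluation condition I approximate $f$ uniformly on $[-M,M]$ by a polynomial $p$; continuity of $b\mapsto p(A_b)v$ follows by induction on the degree from
\[ A_b^2 v-A_{b_0}^2 v=A_b(A_b v-A_{b_0}v)+(A_b-A_{b_0})A_{b_0}v, \]
where the first summand is controlled by the uniform bound $\|A_b\|\leq M$ and the second by continuity of $b\mapsto A_b w$ at the fixed vector $w=A_{b_0}v$. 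For the $\mathcal{K}(H)$-norm condition I split $f(A_b)^2-1=(A_b^2-1)+\eta(A_b)$; the first summand is continuous by hypothesis, and for the second I approximate $\eta$ uniformly on $[-M,M]$ by a real polynomial $p(x)=(x^2-1)q(x)$ and decompose
\[ p(A_b)-p(A_{b_0})=(A_b^2-A_{b_0}^2)q(A_b)+(A_{b_0}^2-1)\bigl(q(A_b)-q(A_{b_0})\bigr). \]
The first term is small in norm by hypothesis, using $\|q(A_b)\|\leq\|q\|_{\infty,[-M,M]}$. The second is small because $q(A_b)-q(A_{b_0})$ is uniformly bounded, self-adjoint, and converges to $0$ pointwise (by the evaluation step), and composing with the compact operator $A_{b_0}^2-1$ upgrades pointwise to norm convergence, since strong convergence of a norm-bounded net is uniform on compact sets.

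For the homotopy statement I define $H\colon B\times[0,1]\to\widetilde{Fred}$ by $H(b,t)\vcentcolon= h_t(A_b)$, with $h_t(x)=h(x,t)$. Each $h_t$ satisfies the hypothesis of the first part, so $H(b,t)\in\widetilde{Fred}$, and clearly $H(\cdot,0)=f(A)$, $H(\cdot,1)=A$. Joint continuity follows by repeating the argument above with polynomial approximations taken uniformly in $t\in[0,1]$ on $[-M,M]$, which is possible because $h$ is continuous on the compact set $[-M,M]\times[0,1]$. The main obstacle is exactly the norm-continuity step for $b\mapsto f(A_b)^2-1$: one must turn the merely pointwise convergence of $q(A_b)$ into norm convergence, and this succeeds only because the decomposition above pairs the non-compact factor $q(A_b)$ with $A_b^2-A_{b_0}^2$, which \emph{is} controlled in norm by hypothesis, while the compact factor $A_{b_0}^2-1$ absorbs the strongly-convergent $q(A_b)-q(A_{b_0})$.
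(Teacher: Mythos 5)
Your proof is correct, and it takes a genuinely different route from the paper's. For evaluation-continuity the paper quotes the continuous-field functional calculus (Theorem~\ref{functional calculus}); for norm-continuity of $b\mapsto f(A_b)^2-1$ it introduces $\tilde f(x)=f(\sqrt{x})^2$ and asserts $(f(A_b))^2=\tilde f(A_b^2)$, reducing everything to the norm-continuous map $b\mapsto A_b^2$. You instead work by direct polynomial approximation, handling the evaluation step by an inductive estimate on a neighbourhood where $\Vert A_b\Vert\le M$, and handling the compactness step by writing $\eta(x)\vcentcolon=f(x)^2-x^2$, approximating $\eta$ uniformly by $(x^2-1)q(x)$, and using the decomposition
\[p(A_b)-p(A_{b_0})=(A_b^2-A_{b_0}^2)\,q(A_b)+(A_{b_0}^2-1)\bigl(q(A_b)-q(A_{b_0})\bigr),\]
together with the fact that a fixed compact operator turns a bounded, self-adjoint, strongly-null net into a norm-null one. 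Your longer route is in fact the more robust one: the paper's identity $(f(A_b))^2=\tilde f(A_b^2)$ amounts to $f(\lambda)^2=f(|\lambda|)^2$ on $\mathrm{spec}(A_b)$, i.e.\ to $|f|$ being even, which is neither assumed in the lemma nor satisfied by the shifted functions $g_j$ with $a_j>0$ to which the lemma is applied in the proof of Theorem~\ref{obstruction_strong topology}/\ref{obstruction_weak topology}; your decomposition sidesteps this entirely and thus establishes the lemma at the stated generality. You also correctly flag the one delicate point in your argument: upgrading strong to norm convergence through the compact factor $A_{b_0}^2-1$ uses the self-adjointness of $q(A_b)-q(A_{b_0})$, since the elementary compactness fact only directly gives $B_bK\to 0$ in norm and one must pass to adjoints to reach $KB_b\to 0$.
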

\begin{proof}
For the well-definedness of $f(A)$: we have to show that $(f(A)_b)^2-1$ is a compact operator. This holds because $f|_{\mathbb{R}\setminus (-c,c)}=id$ and $A_b^2-1$ is compact.\\
Now we show that $f(A)$ is continuous. Recall that $\widetilde{Fred}$ carries the weakest topology such that the evaluation and the map $F\mapsto F^2-1$ is continuous. We consider $A$ as morphism of the trivial continuous fields (compare Example \ref{trivial bundle homo}). By Theorem \ref{functional calculus} the map $f(A)$ is again a morphism of continuous fields. Therefore the evaluation is continuous. For the second part we have to show that the map $b\mapsto (f(A)_b)^2-1$ is continuous. We know that $b\mapsto A_b^2-1$ is continuous and define the continuous map $\tilde{f}\colon \mathbb{R}_{\geq 0}\rightarrow \mathbb{R}$ given by $\tilde{f}(x)=(f(\sqrt{x}))^2$. We see that $(f(A)_b)^2=\tilde{f}(A^2_b)$. Since $\tilde{f}$ and $b\mapsto A_b^2-1$ are continuous and $\bigcup_{b\in B} spec(A_b^2)\subset \mathbb{R}_{\geq 0}$, the map $b\rightarrow \tilde{f}(A_b^2)-1$ is continuous. \\
For the last part: Let $h\colon \mathbb{R}\times [0,1]\rightarrow \mathbb{R}$ be the homotopy. We consider the map $H\colon B\times[0,1]\rightarrow \widetilde{Fred}$ given by $H(b,t)=h(A_b,t)$. This is a homotopy between $f(A)$ and $A$. 
\end{proof}

\begin{proof}[Proof of Theorem \ref{obstruction_weak topology}]
The proof is similar to the proof of Theorem \ref{obstruction_strong topology} using the Lemma \ref{invertierbar} and replacing Lemma \ref{homotopie_stark} by Lemma \ref{homotopie}.
\end{proof}

\subsection{Proof of Lemma \ref{twisted Dirac circle}}
This example was already considered in \cite{Bu} and \cite{DK} in a little different context. 

Let $B=U(k)$ be the unitary group. Recall that there is a bijection between $n$-dimensional, complex vector bundles over $S^1\wedge B$ and homotopy classes of maps $B\rightarrow U(n)$. We consider the $k$-dimensional bundle $V$ over $S^1\wedge B$ represented by the map $id\colon  U(k)\rightarrow U(k)$. Let $p\colon S^1\times B\rightarrow S^1\wedge B$ be the projection and consider $W\vcentcolon=p^\ast V$. We can construct $V$ explicitly: Let $\tilde{V}$ be the trivial bundle over $[0,1]\times U(k)$ given by \[\tilde{V}=[0,1]\times \mathbb{C}^k \times U(k).\]
Now $W\rightarrow S^1\times U(k)$ is $\tilde{V}/\sim$ where $(0,v,g)\sim (1,gv,g)$. This bundle is trivial over $\{0\}\times U(k)$, hence we consider it as bundle $V$ over $S^1\wedge U(k)$. Let $c_i^{odd}=c^{odd}_i(V)$. The cohomology ring $H^\ast(U(k))$ is the exterior algebra $\Lambda [c_1^{odd},\dots ,c_k^{odd}]$ (see for example \cite[pp. 243]{Ci}). The relation
\[c^{odd}_1(V)\smile \dots \smile c^{odd}_k(V)\not =0\in H^{\ast}(U(k))\] follows.

Now we consider the trivial bundle $q\colon  S^1\times U(k)\rightarrow U(k)$ and the vertical Clifford bundle $\Sigma(S^1)\otimes W\rightarrow S^1\times U(k)$. The fibre-wise Dirac operator represents an element in $[u]\in K^1(U(k))$. By an odd version of the Atiyah-Singer-Index theorem for families (see for example \cite[Chapter 3]{APS76}) we get \[ch^{odd}([u])=q_!(ch(W)\hat{A}(S^1))=q_!(p^\ast(ch(V))\hat{A}(S^1))\] where $q_!$ is the Gysin map associated to $q$ (for a definition see \cite{LM89}). The $\hat{A}$-genus of the circle is given by $\hat{A}(S^1)=1$. Since the Gysin map of the trivial bundle is just given by the Künneth formula $H^\ast(S^n\times X)\cong H^\ast(S^n)\otimes H^\ast(X)$ (see \cite[Chapter 4]{H02}), the composition $q_!\circ p^\ast$ is exactly the suspension isomorphism $H^\ast(S^1\wedge U(k))\xrightarrow{\cong} H^{\ast-1}(U(k))$. We see that $ch^{odd}(u)=ch^{odd}(V)$. The equality $c_i^{odd}(u)=c_i^{odd}(V)$ follows with Lemma \ref{computation odd chern}. The claim follows by Theorem \ref{obstruction_weak topology}.

\end{document}